\documentclass{article}

\usepackage[utf8]{inputenc}
\usepackage[english]{babel}

\usepackage{geometry}
\usepackage{fancyhdr}
\usepackage{titlesec}
\usepackage{indentfirst}
\usepackage{float}

\usepackage{tikz}
\usetikzlibrary{arrows.meta, positioning}

\usepackage{amsmath,comment}
\usepackage{amssymb}
\usepackage{mathrsfs}
\usepackage{amsthm}
\usepackage{amsfonts}
\usepackage{ifthen}

\usepackage{aliascnt}

\theoremstyle{plain}

\newtheorem{thm}{Theorem}[section]

\newaliascnt{prop}{thm}
\newtheorem{prop}[prop]{Proposition}
\aliascntresetthe{prop}

\newaliascnt{lem}{thm}
\newtheorem{lem}[lem]{Lemma}
\aliascntresetthe{lem}

\newaliascnt{cor}{thm}
\newtheorem{cor}[cor]{Corollary}
\aliascntresetthe{cor}

\newaliascnt{defn}{thm}
\newtheorem{defn}[defn]{Definition}
\aliascntresetthe{defn}

\newaliascnt{prob}{thm}

\aliascntresetthe{prob}

\newtheorem{introthm}{Theorem}

\newaliascnt{introcor}{introthm}

\aliascntresetthe{introcor}

\newtheorem*{thm*}{Theorem}
\newtheorem*{cor*}{Corollary}

\theoremstyle{remark}

\newaliascnt{example}{thm}

\aliascntresetthe{example}

\newtheorem{remark}{Remark}

\newtheorem*{remark*}{Remark}

\usepackage[linktoc=all]{hyperref}
\usepackage{cleveref}

\crefname{thm}{Theorem}{Theorems}
\Crefname{thm}{Theorem}{Theorems}

\crefname{prop}{Proposition}{Propositions}
\Crefname{prop}{Proposition}{Propositions}

\crefname{lem}{Lemma}{Lemmas}
\Crefname{lem}{Lemma}{Lemmas}

\crefname{cor}{Corollary}{Corollaries}
\Crefname{cor}{Corollary}{Corollaries}

\crefname{defn}{Definition}{Definitions}
\Crefname{defn}{Definition}{Definitions}

\crefname{prob}{Problem}{Problems}
\Crefname{prob}{Problem}{Problems}

\crefname{introthm}{Theorem}{Theorems}
\Crefname{introthm}{Theorem}{Theorems}

\crefname{introcor}{Corollary}{Corollaries}
\Crefname{introcor}{Corollary}{Corollaries}

\crefname{example}{Example}{Examples}
\Crefname{example}{Example}{Examples}

\crefname{remark}{Remark}{Remarks}
\Crefname{remark}{Remark}{Remarks}

\crefname{enumi}{Item}{Items}
\Crefname{enumi}{Item}{Items}

\newcommand{\bbE}{\mathbb{E}}
\newcommand{\bbN}{\mathbb{N}}
\newcommand{\bbP}{\mathbb{P}}

\newcommand{\bbZ}{\mathbb{Z}}

\newcommand{\cP}{\mathcal{P}}

\newcommand{\rar}{\rightarrow}

\newcommand{\ol}[1]{\overline{#1}}

\newcommand{\abs}[1]{\left|#1\right|}

\newcommand{\divides}{\bigm|}

\renewcommand{\Pr}{\mathbb{P}}

\title{The isomorphism problem for random generalized Baumslag-Solitar groups with many edges}

\author{
Dario Ascari, Alessandra Caraceni
}

\date{}

\begin{document}

\maketitle

\begin{abstract}
    The isomorphism problem for generalized Baumslag-Solitar (GBS) groups is a long-standing open problem. We prove that, in the random model considered here, a GBS graph with many edges is flexible with high probability. Since the isomorphism problem is decidable for flexible GBS graphs, this gives a high-probability decidability result for random GBS groups in this regime.
\end{abstract}



\section{Introduction}

The isomorphism problem for generalized Baumslag-Solitar groups (GBS groups) is a long-standing open question. Its relevance comes from the study of the JSJ decomposition, which is a key tool both in the classification of (closed orientable) $3$-manifolds, and in the study of hyperbolic groups and their automorphism groups \cite{Sel01,DG11,GL17}. In fact, solving the isomorphism problem for GBS groups would also facilitate the classification of large families of graphs of groups \cite{ACK-iso1}. Over the years, many attempts have been made, yielding solutions in several restricted cases \cite{For02,For03,For06,Lev07,CF08,Dud17,Wan25,ACK-iso2,ACK-iso3}. However, the problem turns out to be highly challenging, and it remains open in the general setting. In this paper, using the results of \cite{ACK-iso2}, we show that the isomorphism problem is decidable with high probability for \emph{random GBS groups with many edges}, in a sense we will clarify shortly.

\paragraph{Description of the random model.} 
A GBS group is defined as the fundamental group of a graph of groups with vertex groups and edge groups isomorphic to $\bbZ$. Every GBS group can be represented by a finite connected graph, where every end of every edge is equipped with a label, which is a non-zero integer (see \Cref{sec:preliminaries} for the details). Thus, in order to provide a GBS group, one can specify a finite connected graph, and, letting $E$ be the cardinality of its edge set, a sequence of $2E$ non-zero integers to be attached to the ends of the graph's edges.

We use the following model in order to sample a \emph{random} GBS group. Having chosen positive integers $V, E, D, N$ and a set of prime numbers $\cP$ of cardinality $\abs{\cP}=D$, we consider the following object, which we call \emph{a random GBS graph with parameters $V,E,D,N$}. First of all, we fix the set $[V]=\{1,2,\ldots,V\}$ to be the vertex set; we then sample a list of $E$ edges, each of which joins two endpoints chosen independently and uniformly at random in $[V]$ (note that the two endpoints of an edge may coincide). We then sample $2E$ independent and identically distributed labels, one at each end of each edge, of the form $(-1)^\epsilon\cdot\prod_{p_i\in\cP}p_i^{\alpha_i}$, where $\alpha_i\in\{0,\dots,N-1\}$ and $\epsilon\in\{0,1\}$ are chosen uniformly at random and independently. For convenience in the random setup, we allow the GBS graph to be disconnected in general; if this is the case, no ensuing GBS group will be defined. If the graph is connected -- which, in the regime of random GBS graphs ``with many edges'', will be the case with high probability, the sampled random GBS graph yields a corresponding GBS group.

The above defines a natural probabilistic model for random GBS groups. 
The random graph is sampled in a very simple way reminiscent of the Erdős–Rényi model, designed to allow for loops and multiple edges (both features that do occur in general for GBS graphs). As for the labels, we fix a set of primes $\cP$, we impose that the labels have factorizations using only primes from $\cP$, and we choose the exponents in the factorizations independently and uniformly at random in $[0,N-1]\cap\mathbb{Z}$. The set of primes $\cP$ is easily seen to be an isomorphism invariant of the GBS group; moreover, when trying to decide the isomorphism problem between two GBS groups, the set of primes $\cP$ is completely irrelevant, and the isomorphism problem is decided only by the exponents appearing in the factorizations \cite{ACK-iso1}. For this reason, it seems reasonable to us to fix the set of primes $\cP$ -- or indeed, just the cardinality $D=\abs{\cP}$ of this set of primes -- and to choose the exponents independently at random.

\begin{remark}
    A different natural choice for a model of random GBS is to pick the labels as non-zero integers in $[-N,N]$ independently and uniformly at random. 
    However, the complexity of the isomorphism problem arises from divisibility relations between labels in the graph (compare with the notion of algebraically rigid GBS group in \cite{Lev07}). We discuss this alternative model, and we show that random GBS graphs obtained in this way are rigid with high probability (see \Cref{introthm:random-GBS-is-rigid} below), and thus the isomorphism problem can be decided algorithmically \cite{CF08}.
\end{remark}

\paragraph{Random GBS graphs are flexible.} 
We introduce the notion of \emph{flexible GBS graph} (see \Cref{def:flexible-GBS}, and more generally \Cref{sec:flexibility} for more about the notion of flexibility), and we prove that, using the algorithm described in \cite{ACK-iso2}, the isomorphism problem is decidable within the class of flexible GBS graphs.

\begin{introthm}[\cite{ACK-iso2}]\label{introthm:isomorphism-for-flexible}
    There is an algorithm that, given two flexible GBS graphs, determines whether or not they represent isomorphic GBS groups.
\end{introthm}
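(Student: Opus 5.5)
The plan is to reduce the statement to the main decidability result of \cite{ACK-iso2}. That result solves the isomorphism problem for GBS graphs satisfying a certain structural hypothesis on the configuration of labels. Concretely, \cite{ACK-iso2} gives an algorithm for GBS graphs in which the affine/linear data governing the moves (slides, inductions, swaps) is sufficiently ``free''. In that situation the orbit of a graph under elementary moves is described by a finitely generated, effectively computable structure, for instance a submodule or a Presburger-definable set of exponent vectors in $\bbZ^D$. So the first step is to recall, via the identification of \cite{ACK-iso1}, that a GBS graph is encoded by its underlying graph together with sign data and exponent vectors in $\bbN^D$, one per edge end, and that the isomorphism problem depends only on this data.

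The second step is to check that our notion of flexibility (\Cref{def:flexible-GBS}) implies the hypothesis under which the algorithm of \cite{ACK-iso2} applies. I expect flexibility to say, roughly, that some vertex carries enough edges whose labels have small or matching exponent vectors, or that the configuration admits sufficiently many independent slide/induction moves. This should force every graph in the isomorphism class to be reachable, up to a normal form, through a controlled family of moves. I would verify this hypothesis by hypothesis. First, flexibility is preserved under elementary moves; this is needed so that the algorithm's search stays inside the class. Second, flexible graphs admit a computable normal form, or a finite certificate, for their move-equivalence class.

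The third step is the algorithm itself. Given two flexible GBS graphs $\Gamma,\Gamma'$, we first compare the computable invariants: the prime set $\cP$, first Betti number, modular homomorphism, and orientability/sign data. If these agree, we run the procedure of \cite{ACK-iso2}, which puts each graph in normal form via sequences of slides and swaps and compares the resulting finite data. Since by Forester's theorem any two reduced graphs of isomorphic GBS groups are related by slides, inductions and $\mathcal{A}^{\pm1}$-moves (in the non-elementary case), equality of normal forms is equivalent to isomorphism. The degenerate elementary cases ($\bbZ$, $\bbZ^2$, Klein bottle group) are handled separately by direct inspection.

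The main obstacle is the second step: matching the definition of flexibility here with the exact hypotheses in \cite{ACK-iso2}, and in particular showing that flexibility is invariant under the moves. If the direct invariance fails, I would instead show that every graph in the deformation space of a flexible graph can be brought by moves to one that is flexible in the stronger sense required by \cite{ACK-iso2}. I would then apply the algorithm to these representatives.
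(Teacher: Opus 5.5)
Your proposal has a genuine gap, and it sits exactly where the content of the proof lies. You never say which hypothesis of \cite{ACK-iso2} has to be checked. You also do not use \Cref{def:flexible-GBS} (fully reduced; no edge $e$ with both $e$ and $\ol{e}$ minimal; full-support quotients between dividing labels at a common vertex), and instead guess at what flexibility ``roughly'' means. So your second step is a placeholder, and your first and third steps only restate that \cite{ACK-iso2} contains an algorithm. What actually has to be shown is that a flexible graph has \emph{one quasi-conjugacy class} and \emph{full-support gaps} in the sense of \cite{ACK-iso2}. These are checked on the given input graph, and \cite{ACK-iso2} does everything else. Your planned lemma that flexibility is invariant under elementary moves is therefore unnecessary, and it is in fact false. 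Suppose $\tau(f)=\tau(e)$ and $\psi(f)=\psi(e)k$. Sliding $f$ along $e$ gives $f$ the label $\psi(\ol{e})k$ at $\iota(e)$. If another edge there carries $\psi(\ol{e})kp$ with $p$ prime, the new quotient is $p$. This violates \Cref{itm:full-supp-gaps} of \Cref{def:flexible-GBS} once $\cP(\Gamma,\psi)$ has two primes, whereas before the slide the relevant quotient $kp$ had full support. A flexible one-vertex example with $\cP=\{2,3\}$ is easy to write down. Your fallback (passing to a representative flexible ``in a stronger sense'') is not specified enough to evaluate.

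The missing idea is a short path-following argument. Full-support gaps is just \Cref{itm:full-supp-gaps} of \Cref{def:flexible-GBS}. For one qc-class, proceed as follows.
\begin{itemize}
\item Start from a minimal edge $\ol{e}_0$. By \Cref{itm:no-min-min}, $e_0$ is not minimal.
\item Hence there is a minimal $\ol{e}_1$ with $\iota(e_1)=\tau(e_0)$ and $\psi(\ol{e}_1)$ properly dividing $\psi(e_0)$. By \Cref{itm:full-supp-gaps}, the quotient is divisible by every prime of $\cP(\Gamma,\psi)$.
\item Iterating gives an infinite path $e_0,e_1,e_2,\dots$ in a finite graph, so some edge repeats.
\item Closing up the path shows that a generator of the edge group of $e_0$ is conjugate to a proper power of itself, with exponent divisible by all primes of $\cP(\Gamma,\psi)$. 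So the qc-class of $e_0$ has qc-support equal to all of $\cP(\Gamma,\psi)$.
\item Every edge lies above some minimal edge at its terminal vertex, so every edge belongs to a qc-class with full qc-support. This forces a single qc-class.
\end{itemize}
This is where \Cref{itm:no-min-min,itm:full-supp-gaps} are genuinely used. Your proposal needs this step in place of the move-invariance argument.
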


We now estimate the probability that a random GBS graph is flexible. 

Consider the random GBS graph with $V$ vertices, $E$ edges, $D$ primes and exponents up to $N$. Note that, under the mild assumption of being fully reduced, the values $V,E,D$ are isomorphism invariant of the GBS group, so it makes sense to compare GBS groups while keeping these parameters fixed. On the other hand, the parameter $N$ describing the size of the exponents is not an isomorphism invariant: this motivates the following \Cref{def:probability-limit}.

\begin{defn}\label{def:probability-limit}\
    \begin{enumerate}
        \item For positive integers $V,E,D,N$, let $p(V,E,D,N)\in[0,1]$ be the probability that a random GBS graph with those parameters is flexible.
        \item For positive integers $V,E,D$, define the \emph{probability that the random GBS graph with $V$ vertices, $E$ edges and $D$ primes is flexible} as
    \[
        P(V,E,D):=\liminf_{N\rar +\infty}p(V,E,D,N).
    \]
    \end{enumerate}
\end{defn}

The main result of this paper is \Cref{introthm:random-is-flexible}, stating that a random GBS is flexible with high probability, provided it has sufficiently many edges.

\begin{introthm}[\Cref{thm:random-GBS}]\label{introthm:random-is-flexible}
    Let $0<\epsilon<\frac{1}{50}$. Suppose that $E\ge V^{2+\epsilon}$ and $E\ge e^{2D/\epsilon^2}$. Then
    \[
        P(V,E,D)\ge 1-\frac{1}{E^{\epsilon/8}}.
    \]
\end{introthm}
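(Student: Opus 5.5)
The plan is to bound the probability of non-flexibility directly. Since \Cref{def:flexible-GBS} is not yet available at this point, I will work with the reformulation I expect it to admit: a GBS graph is flexible once every vertex carries enough independent ``moves'', meaning edges, loops or closed paths whose label exponent vectors let the vertex generate every relevant divisibility configuration. Write each label as a sign times an exponent vector $a\in\{0,\dots,N-1\}^D$. As $N\to\infty$, the normalized vectors $a/N$ become i.i.d.\ uniform on $[0,1]^D$, and divisibility between two labels becomes the coordinatewise order $a\le b$. Almost surely there are no ties, so the event in the limit is determined by the coordinatewise order pattern of $2E$ i.i.d.\ uniform points of $[0,1]^D$. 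The first step is to check that $p(V,E,D,N)$ converges to the probability of the corresponding continuum event, with an $O(E^2D/N)$ error for ties; it then suffices to bound the continuum probability.

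The second step controls the graph. With $E\ge V^{2+\epsilon}$ edges placed uniformly among $V^2$ ordered pairs, Chernoff bounds show that with probability $1-Ve^{-cE/V^2}$ every vertex is incident to at least $E/(2V)\ge V^{1+\epsilon}/2$ edges. The same bound gives at least $E/(2V^2)\ge V^{\epsilon}/2$ parallel edges between every pair of vertices, and about $E/V^2$ loops at every vertex. This failure probability is far below $E^{-\epsilon/8}$. The same bound also gives connectedness.

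The third step handles the labels. I need each vertex to see enough ``extremal'' or ``dominating'' configurations. Concretely, I expect flexibility to need, at each vertex $v$, a loop or edge end whose label at $v$ divides the other labels in the required way, or a minimal element among the labels at $v$ of suitable type. For $m$ i.i.d.\ uniform points in $[0,1]^D$, the number of Pareto-minimal points is about $(\log m)^{D-1}/(D-1)!$. A given point is dominated by another with probability about $2^{-D}$ per comparison, so among $m\ge E^{\epsilon/2}$ candidates the chance that no candidate has the needed dominance is at most $(1-2^{-D})^{m}$ or a similar quantity. Using $E\ge e^{2D/\epsilon^2}$, so that $D\le \frac{\epsilon^2}{2}\log E$, gives $2^D\le E^{\epsilon^2/2}\ll E^{\epsilon/2}$. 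Each such failure is therefore superpolynomially small, and a union bound over $V\le E^{1/2}$ vertices and $V^2$ pairs still leaves the total below $E^{-\epsilon/8}$.

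The main obstacle is the third step. There I must translate the precise combinatorial definition of flexibility into an event about i.i.d.\ points in $[0,1]^D$, and then bound its failure probability with exponents that survive the union bound. The delicate regime is the Pareto-type counting: the number of incomparable configurations grows like $(\log E)^{D}$, and I must show this is $\le E^{\epsilon/8}$-negligible. That is exactly where the hypothesis $E\ge e^{2D/\epsilon^2}$ should enter, via $(\log E)^D\le E^{D\log\log E/\log E}$ together with $D\le\epsilon^2\log E/2$. I would first try reducing flexibility to a single ``every vertex has a loop whose two labels are comparable, plus edges realizing all sign patterns'' event and estimate that.
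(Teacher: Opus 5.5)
There is a genuine gap. Your argument never uses the actual definition of flexibility, and the event you propose to estimate in your third step is the wrong one. By \Cref{def:flexible-GBS}, flexibility means (1) fully reduced, (2) no edge $e$ with both $e$ and $\ol{e}$ minimal, and (3) full-support gaps.

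Conditions (1) and (3) are cheap. Condition (1) follows from no label being $\pm1$ (\Cref{lem:fully-reduced}). Condition (3) follows from the exponents of each prime being pairwise distinct over all $2E$ labels, since then any divisibility between two distinct labels is strict in every coordinate. Both fail with probability at most $4E/N^D+4E^2D/N$, which vanishes in the $\liminf_{N\to\infty}$.

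All the content is in (2), which is a \emph{universal prohibition}: no single edge may carry labels that are Pareto-minimal at both of its ends. Per-vertex existential statements of the kind you propose cannot rule out one such bad edge, so they do not imply flexibility. This applies both to ``some candidate has the needed dominance'' with failure probability $(1-2^{-D})^m$, and to ``every vertex has a loop with comparable labels plus edges realizing all sign patterns''. Signs, parallel edges between every pair of vertices, and loops at every vertex play no role. Your no-ties reduction to uniform points in $[0,1]^D$ is correct, as is your sense that $D\le\frac{\epsilon^2}{2}\log E$ is what tames $(\log E)^{D-1}/(D-1)!$; but both must be aimed at a different target.

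The missing idea is a global first-moment argument on the total number $M$ of minimal labels. Conditioned on the valences $k_1,\dots,k_V$, the graph is a uniform random pairing of the $2E$ half-edges, independent of the labels. The set of minimal half-edges depends only on the labels and the valences. Hence the expected number of edges with two minimal ends is $\binom{M}{2}/(2E-1)$. By Markov's inequality (\Cref{lem:no-pair-of-minimals}), the probability that (2) fails is at most $M^2/(2E)$.

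So you must show that minimal labels are \emph{rare}, namely $M\le E^{1/2-\epsilon/16}$ with high probability. This is the opposite of requiring many good configurations at each vertex, and it is where your Pareto counting actually enters. Once every valence exceeds $2L$ with $L=2\log(2E)$ (\Cref{lem:estimate-valences}), \Cref{prop:minimal-continuous-estimate} gives, with probability at least $1-1/E$,
\[
M\le 8LV(\log E)^{D-1}/(D-1)!+2\log(2E).
\]
The hypothesis $E\ge e^{2D/\epsilon^2}$ makes the factor multiplying $V$ at most $E^{\epsilon/6}$. The hypothesis $E\ge V^{2+\epsilon}$ is used precisely as $V\le E^{1/(2+\epsilon)}$. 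Together these give $M\le E^{\frac{1}{2+\epsilon}+\frac{\epsilon}{6}}\le E^{\frac12-\frac{\epsilon}{16}}$, and hence a failure probability of at most $\frac12E^{-\epsilon/8}$ for (2).
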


In particular, for all (positive integer) sequences $V_n,E_n,D_n$ with $E_n\ge V_n^{2+\epsilon}$ and $E_n\ge e^{2D_n/\epsilon^2}$, provided that $\lim_{n\to\infty}E_n=\infty$, we have that $\lim_{n\to+\infty} P(V_n,E_n,D_n)=1$. In other words, a random GBS graph with many edges is flexible with high probability, allowing us to decide the isomorphism problem.

We observe that the complete graph has $E\approx\frac12 V^2$. Therefore, the condition $E\ge V^{2+\epsilon}$ can be viewed as requiring slightly more topological edges than a complete graph on $V$ vertices, by a polynomial factor of $V^\epsilon$. The condition $E\ge e^{2D/\epsilon^2}$ (or some other condition stating that $E$ is big enough compared to $D$) is also necessary. In fact, if we fix $V,E$ and we let $D\rar+\infty$, then the probability that the ensuing random GBS graph is rigid tends to $1$.

The first technical ingredient in the proof of \Cref{introthm:random-is-flexible} is an estimate for the number of labels which are \emph{minimal} in the sense of divisibility among those attached near the same vertex. We show that, choosing $k$ labels at random (using our model), the number of minimal labels is approximately $\frac{(\log k)^{D-1}}{(D-1)!}$. One can then establish that, if fewer than $\sqrt{E}$ of the $2E$ labels are minimal, then with high probability no edge has minimal labels at both ends (see \Cref{lem:no-pair-of-minimals}). For GBS graphs where no edge carries two minimal labels, flexibility follows from \cite{ACK-iso2}.

Finally, in \cite{ACK-iso1} the isomorphism problem is essentially reduced to the case of one-vertex GBS groups. We give a variant of \Cref{introthm:random-is-flexible} for random GBS graphs with one vertex in \Cref{introthm:random-onevertex-is-flexible}.

\begin{introthm}[\Cref{thm:random-GBS-one-vertex}]\label{introthm:random-onevertex-is-flexible}
    Suppose that $E\ge e^{9D}$. Then
    \[
        P(1,E,D)\ge 1-\frac{1}{\sqrt[4]{E}}.
    \]
\end{introthm}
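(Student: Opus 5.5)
We prove \Cref{introthm:random-onevertex-is-flexible} along the lines sketched above, specialised to $V=1$. All $2E$ labels sit around the single vertex, so the minimal labels are the minimal elements, for divisibility, of one multiset of $2E$ random labels. Divisibility depends only on the exponent vectors in $\{0,\dots,N-1\}^D$; the sign is irrelevant. As $N\to+\infty$, the normalized exponent vectors converge to i.i.d.\ uniform points in $[0,1]^D$. Ties (two labels sharing a coordinate) have probability $O(E^2/N)\to 0$, so they disappear in the $\liminf$ defining $P(1,E,D)$.

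Minimality is then coordinatewise minimality among $k=2E$ i.i.d.\ uniform points in $[0,1]^D$. This is the classical count of maximal points (Pareto points).

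\textbf{Step 1 (expected number of minimals).} A point $x$ is minimal if none of the other $k-1$ points lies in the box $[0,x]$. Hence
\[
\mathbb{E}[M_k]=k\int_{[0,1]^D}\Bigl(1-\prod_i x_i\Bigr)^{k-1}dx .
\]
Substituting $u=\prod_i x_i$, whose density is $(-\log u)^{D-1}/(D-1)!$, gives a bound of the form $\mathbb{E}[M_k]\le C\,(\log k+1)^{D-1}/(D-1)!$ with an explicit constant. One can alternatively use the recursion $\mathbb{E}[M_k^{(D)}]=\sum_{j\le k}\frac1j\,\mathbb{E}[M_j^{(D-1)}]$, which bounds it by $H_k^{D-1}$, where $H_k=\sum_{j\le k}1/j$ is the harmonic number. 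I would use the cleaner bound $\mathbb{E}[M_k]\le (1+\log k)^{D-1}$.

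\textbf{Step 2 (few minimals with high probability).} By Markov's inequality,
\[
\Pr\bigl(M_{2E}\ge E^{1/2-\delta}\bigr)\le (1+\log 2E)^{D-1}E^{\delta-1/2}.
\]
We need $(1+\log 2E)^{D}\le E^{c}$ for a small $c$. The hypothesis $E\ge e^{9D}$ gives $D\le \log E/9$, and hence $D\log(1+\log 2E)\le \frac{\log E}{9}\log(1+\log 2E)$. This is not $\le c\log E$ for large $E$, so a plain Markov bound is too weak when $D$ is close to $\log E/9$. This is the main obstacle. I would instead bound $\mathbb{E}[M_k]$ more sharply by $\frac{(\log k+D)^{D-1}}{(D-1)!}$ and apply Stirling:
\[
\frac{(\log k+D)^{D-1}}{(D-1)!}\le \Bigl(\frac{e(\log k+D)}{D}\Bigr)^{D}.
\]
With $\log k\approx\log E\ge 9D$, set $t=\log E/D\ge 9$. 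The right-hand side is then $\exp\bigl(D\log(e(t+1))\bigr)=E^{\log(e(t+1))/t}$. At $t=9$ the exponent is $\log(10e)/9\approx 0.367<1/2$, and the function $\log(e(t+1))/t$ decreases in $t$. This is presumably where the constant $9$ comes from. So $\Pr(M\ge \sqrt{E}\,E^{-\eta})$ is at most a power $E^{-\eta'}$, after tuning $\eta$ so that the total loss is $E^{-1/4}$.

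\textbf{Step 3 (no edge with two minimal ends).} Condition on the set of minimal labels, of size $m$. The pairing of the $2E$ ends into edges is independent of the label values, because the labels are i.i.d.\ and assigned to ends exchangeably. So the probability that some edge has both ends minimal is at most $m^2/(2E)$; this is the content of \Cref{lem:no-pair-of-minimals}. With $m\le E^{1/2-\eta}$ this is $\le E^{-2\eta}$.

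Combining Steps 2 and 3, with probability at least $1-E^{-1/4}$ no edge carries two minimal labels. By the criterion from \cite{ACK-iso2} quoted above, such a GBS graph is flexible. Taking $\liminf_{N\to\infty}$ concludes the proof.
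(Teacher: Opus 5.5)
\textbf{The gap is quantitative, in Step 2.} No choice of $\eta$ makes Markov's inequality together with \Cref{lem:no-pair-of-minimals} give the rate $E^{-1/4}$.

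Take your own estimate $\mathbb{E}[M]\le E^{a}$ with $a=\log(10e)/9\approx 0.367$. This is the binding case, since $D\approx\frac19\log E$ is allowed by the hypothesis. Markov at the threshold $E^{1/2-\eta}$ costs $E^{a-1/2+\eta}$, and the pairing step costs $\frac12E^{-2\eta}$. For both to be at most $E^{-1/4}$ you would need $\eta\ge\frac18$ and $\eta\le\frac14-a<0$, which is impossible. Balancing the two terms gives only about $E^{-(1-2a)/3}\approx E^{-0.09}$.

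A sharper first-moment bound cannot rescue this. Markov plus pairing needs $\mathbb{E}[M]\lesssim E^{1/8}$. But the true mean at $D\approx\frac19\log E$ is $E^{\log(9e)/9}\approx E^{0.355}$, up to logarithmic factors. Along your route you would need a hypothesis of order $E\ge e^{37D}$.

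So the right target is not ``fewer than $\sqrt E$ minimal labels on average''. It is ``at most $E^{3/8}$ minimal labels with high probability'', because $(E^{3/8})^2/(2E)=\frac12E^{-1/4}$. This is where the constant $9$ comes from: $\log(9e)/9<\frac38<\log(8e)/8$. Merely getting an exponent below $\frac12$, as you guessed, would already work with $6$.

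\textbf{What is missing is concentration of $M$, not just a bound on its mean.} The paper gets this as follows.
\begin{enumerate}
\item It splits the minimal points according to whether $\rho(x)=\prod_i x_i$ is at most $2L/k$, with $L=2\log(2E)$.
\item The count $A$ of \emph{all} points with small $\rho$ is binomial, so a Chernoff bound controls it.
\item The count $B$ of minimal points with large $\rho$ has $\mathbb{E}[B]\le ke^{-L}$, which is negligible.
\end{enumerate}
Together these give $M\le 16\log(2E)(9e)^{\frac19\log E}\le E^{3/8}$ with probability at least $1-\frac1E$, for large $E$. Only then does the paper apply the pairing lemma.

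Alternatively, you can keep your first-moment bound and control the expected number of min--min edges directly. For two fixed points $x,y$ among the $2E$ one has
\[
\Pr(\text{both minimal}\mid x,y)\le\bigl(1-\max(\rho(x),\rho(y))\bigr)^{2E-2}\le\bigl((1-\rho(x))(1-\rho(y))\bigr)^{E-1}.
\]
Integrating and summing over the $E$ edges bounds the failure probability by $\mathbb{E}[M_E]^2/E\lesssim E^{2a-1}$. This is below $E^{-1/4}$ precisely because $a<\frac38$.

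\textbf{A smaller omission.} Flexibility also requires \Cref{itm:fully-red,itm:full-supp-gaps} of \Cref{def:flexible-GBS}, not only the no-min--min condition. By \Cref{lem:itm-fully-red} and \Cref{lem:itm-full-supp-gaps}, these fail with probability at most $4E/N^D+4E^2D/N$, which tends to $0$ and so vanishes in the $\liminf$. You should say this explicitly.
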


\paragraph{An alternative random model.} 
Another natural model is obtained by fixing $V, E$, and $L$, sampling the graph structure as above, and choosing the $2E$ labels independently and uniformly from $\{\pm1,\pm2,\ldots,\pm L\}$. In this model, if $L$ is large enough, divisibility relations between labels at a common vertex are unlikely: for two random integers of size at most $L$, the probability that one divides the other is small. Consequently, this model produces rigid GBS graphs with high probability. We prove the following:

\begin{introthm}[\Cref{thm:random-GBS-is-rigid}]\label{introthm:random-GBS-is-rigid}
    Consider a random GBS graph with $V$ vertices, $E$ edges and labels at most $L$, conditioned on being connected. Suppose that $L\ge (2E)^4$. Then the GBS graph is rigid with probability at least $1-\frac{1}{\sqrt[4]{L}}$.
\end{introthm}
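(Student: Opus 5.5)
We need a plan for the rigidity statement of \Cref{introthm:random-GBS-is-rigid}. We use the criterion from \cite{Lev07,CF08}: a GBS graph is rigid (algebraically rigid) as soon as it is reduced and no label divides another label at the same vertex, up to sign. More precisely, the obstruction to rigidity is a pair of distinct edge-ends $e\neq e'$ at a common vertex with $\lambda_e\divides\lambda_{e'}$. If moreover no label is $\pm1$, then the graph is automatically reduced, since a reduction requires a non-loop edge with a label $\pm1$. It therefore suffices to show that, with probability at least $1-L^{-1/4}$ (after conditioning on connectedness), no label is $\pm1$ and no two of the $2E$ labels divide one another.

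We drop the requirement that the two ends sit at the same vertex and bound everything by a union bound over ordered pairs. Each label is uniform on $\{\pm1,\dots,\pm L\}$, and labels are independent of the graph structure. The probability that a given label is $\pm1$ is $1/L$. For two independent labels $a,b$ with absolute values uniform in $[L]$,
\[
\Pr(a\divides b)=\frac{1}{L^2}\sum_{a=1}^{L}\left\lfloor \frac{L}{a}\right\rfloor\le \frac{1+\log L}{L}.
\]
The union bound over at most $(2E)^2$ ordered pairs and $2E$ single labels gives a failure probability of at most
\[
\frac{(2E)^2(1+\log L)+2E}{L}\le \frac{2\sqrt L\,(1+\log L)}{L},
\]
using $(2E)^2\le\sqrt L$. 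This is well below $\frac12 L^{-1/4}$ once $L$ is moderately large; small $L$ can be checked directly, or handled by tightening the constants.

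Conditioning on connectedness costs nothing. The labels are independent of the underlying graph, and the event we control depends only on the labels, so its conditional probability given connectedness equals its unconditional probability. This step needs to be stated but needs no estimate.

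The main obstacle is making sure the rigidity criterion is applied correctly. The paper's definition of rigid (\Cref{sec:flexibility}) must be matched with Levitt's statement that a reduced GBS graph with no divisibility between distinct labels at a common vertex has a unique reduced representative up to sign changes and admissible slides. In particular we must check that the no-divisibility condition covers loops (both ends of a loop sit at the same vertex, so the pair is included in our union bound) and sign issues (handled by working with absolute values). If the paper's notion of rigidity instead also forbids $\lambda_e=\pm\lambda_{e'}$, this case is already covered by divisibility. The constant check $2\sqrt L(1+\log L)/L\le L^{-1/4}$ holds for $L\ge e^{12}$ or so, and for smaller $L$ the hypothesis $L\ge(2E)^4\ge16$ leaves only finitely many cases. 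We would try to refine the sum bound to cover those cases, for instance using $\Pr(a\divides b, a\ne b)$ together with the fact that a label equal to $\pm1$ is excluded separately.
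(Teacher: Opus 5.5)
Your argument is essentially the paper's: \Cref{lem:all-minimal} is exactly your union bound over the $2E(2E-1)$ ordered pairs, using $\Pr(a\divides b)\le\frac{1+\log L}{L}$ and $(2E)^2\le\sqrt L$. Since \Cref{def:rigid-GBS} is literally the no-divisibility condition, and a label $\pm1$ divides every other label anyway, the detour through Levitt's criterion, reducedness and the extra $2E/L$ term is unnecessary. Dropping it, together with your spurious factor $2$, recovers the paper's threshold $L\ge 2^{16}$.

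The one real loose end is shared with the paper. The hypothesis $L\ge(2E)^4$ only forces $L\ge 2^{16}$ when $E\ge 8$, so the finitely many cases $E\le 7$, $(2E)^4\le L<2^{16}$ need a separate check. A union bound over the $\binom{2E}{2}$ unordered pairs, with $\Pr(a\divides b\text{ or }b\divides a)=\frac{2}{L^2}\sum_{m\le L}\lfloor L/m\rfloor-\frac1L$ and exact or explicitly bounded divisor sums, does close these cases. You flag this range but leave it open, and the paper passes over it silently.
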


Every rigid GBS has (essentially) only one labeled graph representing it. Therefore, the isomorphism problem is decidable for GBSs with this property.

\subsection*{Acknowledgments} Ascari was funded by the Basque Government grant IT1483-22 and by the Juan de la Cierva fellowship JDC2024-054855-I of the Spanish Government.

\section{Preliminaries}\label{sec:preliminaries}

\subsection{Graphs}

We consider graphs as combinatorial objects, following the notation of \cite{Ser77}. A \emph{graph} is a quadruple $\Gamma=(V(\Gamma),E(\Gamma),\ol{\cdot},\iota)$ consisting of a set $V(\Gamma)$ of \emph{vertices}, a set $E(\Gamma)$ of \emph{edges}, a map $\ol{\cdot}:E(\Gamma)\rar E(\Gamma)$ called \emph{reverse} and a map $\iota:E(\Gamma)\rar V(\Gamma)$ called \emph{initial vertex}; we require that, for every edge $e\in E(\Gamma)$, we have $\ol{e}\not=e$ and $\ol{\ol{e}}=e$. For an edge $e\in E(\Gamma)$, we denote by $\tau(e)=\iota(\ol{e})$ the \emph{terminal vertex} of $e$. The terminal and initial vertices of an edge $e$ will often be referred to as \emph{endpoints} of $e$.

It is possible to define the topological realization of a graph $\Gamma$, which is a CW complex. The number of $0$-cells is $\abs{V(\Gamma)}$ and the number of $1$-cells is $\frac{1}{2}\abs{E(\Gamma)}$ (because for every topological edge, the set $E(\Gamma)$ contains the edge and its reverse).

\subsection{Generalized Baumslag-Solitar groups}

A GBS group is the fundamental group of a graph of groups with vertex groups and edge groups isomorphic to $\bbZ$ (see \cite{Ser77} for the notion of graph of groups). This means that every vertex $u$ in the graph has attached a group $G_u\cong\bbZ$, and similarly every edge $e$ in the graph has attached a group $G_e\cong\bbZ$; and whenever $\tau(e)=u$ we have an injective homomorphism $G_e\rar G_u$. Since every injective homomorphism $\bbZ\rar\bbZ$ is given by multiplication by a non-zero integer, a GBS group can be represented as a finite graph where every topological edge is equipped with two labels (one at each end of the edge), which are the non-zero integers giving the injective homomorphisms. This motivates the following definition:

\begin{defn}
    A \emph{GBS graph} is a pair $(\Gamma,\psi)$, where $\Gamma$ is a finite graph and $\psi:E(\Gamma)\rar\bbZ\setminus\{0\}$ is a function.
\end{defn}

The reader should think of $\psi(e)$ as the label written on the edge $e$ near the terminal endpoint $\tau(e)$; an example of a GBS graph is shown in \Cref{fig:GBS-graph}.

A GBS graph determines a GBS group $G$, whose presentation is given as follows. Let $(\Gamma,\psi)$ be a GBS graph, and choose a maximal tree $T=\ol{T}\subseteq E(\Gamma)$. For every vertex $v\in V(\Gamma)$, we consider a generator $a_v$, and for every edge $e\in E(\Gamma)\setminus T$ we consider a generator $t_e$. The GBS group $G$ is given by the following finite presentation:

\vspace{0.15cm}

\noindent 
\begin{equation}\label{eq:presentation}
\begin{tabular}{l l l}
generators: 
& $a_v$ & $v\in V(\Gamma)$;\\[0.06cm]
& $t_e$ & $e\in E(\Gamma)\setminus T$;\\[0.12cm]
relations: 
& $a_v^\ell=a_u^m$ & $e\in T$ with $\iota(e)=v,\tau(e)=u,\psi(e)=m,\psi(\ol{e})=\ell$;\\[0.06cm]
& $t_{\ol{e}}=t_e^{-1}$ & $e\in E(\Gamma)\setminus T$;\\[0.06cm]
& $a_v^\ell t_e = t_e a_u^m$ & $e\in E(\Gamma)\setminus T$ with $\iota(e)=v,\tau(e)=u,\psi(e)=m,\psi(\ol{e})=\ell$.\\
\end{tabular}
\end{equation}

\vspace{0.15cm}

It is easy to see that different choices of the maximal tree $T$ will define isomorphic groups $G$. Every GBS group $G$ can be obtained from some GBS graph by using the above presentation. Different GBS graphs can produce the same GBS group $G$: deciding when two GBS graphs represent isomorphic GBS groups is known as the \emph{isomorphism problem} for GBS groups.

\begin{figure}[H]
\centering
\includegraphics[scale=1]{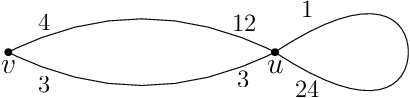}
\caption{The figure shows a GBS graph $(\Gamma,\psi)$ with two vertices $v,u$ and three edges $e_1,e_2,e_3$ (and their reverses). The edge $e_1$ goes from $v$ to $u$ and has $\psi(\ol{e}_1)=4$ and $\psi(e_1)=12$. The edge $e_2$ goes from $v$ to $u$ and has $\psi(\ol{e}_2)=\psi(e_2)=3$. The edge $e_3$ goes from $u$ to $u$ and has $\psi(\ol{e}_3)=1$ and $\psi(e_3)=24$.}
\label{fig:GBS-graph}
\end{figure}

\subsection{Fully reduced GBS graphs and set of primes}

\newcommand{\geqc}{\succeq_{\mathrm{qc}}}
\newcommand{\qcnj}{\sim_{\mathrm{qc}}}

Given a group $G$, consider two elements $g,g'\in G$: we write $g\geqc g'$ if $g$ is conjugate to a power of $g'$. 
Let $(\Gamma,\psi)$ be a GBS graph with corresponding GBS group $G$, and for a vertex $v\in V(\Gamma)$ we denote by $a_v$ the corresponding generator of $G$, as in presentation (\ref{eq:presentation}).

\begin{defn}
    We say that $(\Gamma,\psi)$ is \emph{fully reduced} if there are no distinct vertices $v,u\in V(\Gamma)$ such that $a_v\geqc a_u$.
\end{defn}

It is easy to see that the above definition does not depend on the maximal tree $T\subseteq E(\Gamma)$ used to define the presentation (\ref{eq:presentation}). In what follows, we will always work with fully reduced GBS graphs; this requirement is not restrictive at all, as for every GBS graph, there is a fully reduced one representing the same GBS group (see \cite{For06}). We also point out that, given a GBS graph, we can algorithmically decide whether it is fully reduced or not, and in case it is not, we can algorithmically compute a fully reduced GBS graph representing the same GBS group (see \cite{ACK-iso1}).

We will also use the following sufficient condition to ensure that our GBS graphs are fully reduced. For a random GBS, the probability that there is a label equal to $\pm1$ is very low, and thus the criterion of \Cref{lem:fully-reduced} will be enough for our purposes.

\begin{lem}\label{lem:fully-reduced}
    Let $(\Gamma,\psi)$ be a GBS graph. Suppose that for all $e\in E(\Gamma)$ we have $\psi(e)\not=\pm1$. Then $(\Gamma,\psi)$ is fully reduced.
\end{lem}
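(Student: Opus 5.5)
We argue via Bass--Serre theory. Let $T$ be the Bass--Serre tree of the graph of groups $(\Gamma,\psi)$. The vertices of $T$ are the cosets $gG_v$, and the element $a_v$ fixes exactly those vertices $x$ of $T$ whose stabilizer contains it. Suppose, for contradiction, that $v\neq u$ are distinct vertices of $\Gamma$ with $a_v\geqc a_u$. Then $a_v=g a_u^k g^{-1}$ for some $g\in G$ and some $k\in\bbZ$. We have $k\neq0$, since $a_v$ has infinite order (vertex groups inject into $G$).

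The key step is to describe the fixed point set of $a_v$. Let $\tilde v$ be the vertex of $T$ with stabilizer $G_v=\langle a_v\rangle$. An edge of $T$ at $\tilde v$ corresponds to a coset $a_v^j\langle a_v^{\psi(e)}\rangle$ for some edge $e$ of $\Gamma$ with $\tau(e)=v$. The element $a_v$ fixes such an edge if and only if $a_v\in a_v^j\langle a_v^{\psi(e)}\rangle a_v^{-j}=\langle a_v^{\psi(e)}\rangle$, that is, if and only if $\psi(e)=\pm1$. By hypothesis no label equals $\pm1$, so $a_v$ fixes no edge at $\tilde v$. The fixed point set of an elliptic element in a tree is a subtree, so $\mathrm{Fix}(a_v)=\{\tilde v\}$. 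The same argument at any translate $h\tilde v$ gives that every generator $ha_vh^{-1}$ of a vertex stabilizer fixes exactly one vertex.

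Next consider $a_u^k$. It fixes the vertex $\tilde u$, whose stabilizer is conjugate into $G_u$. Hence $a_v=ga_u^kg^{-1}$ fixes the vertex $g\tilde u$. Since $\mathrm{Fix}(a_v)=\{\tilde v\}$, we get $g\tilde u=\tilde v$. But vertices of $T$ in the same $G$-orbit project to the same vertex of $\Gamma$, and $\tilde u$ and $\tilde v$ project to $u$ and $v$ respectively. This forces $u=v$, a contradiction.

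The only step needing care is the computation of the stabilizer of an edge of $T$ at $\tilde v$ and the check that $a_v$ fixes it exactly when the label is $\pm1$. This is the standard identification of edges at $\tilde v$ with the cosets $G_v/\alpha_e(G_e)$, using that $G_v$ is abelian so that conjugation by $a_v^j$ preserves $\langle a_v^{\psi(e)}\rangle$. Everything else follows directly from standard Bass--Serre facts.
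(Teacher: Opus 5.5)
Your proof is correct, but it takes a different route from the paper. The paper does not argue directly. It invokes the algorithm of \cite{ACK-iso2} for computing quasi-conjugacy classes and observes that, when no label equals $\pm1$, that algorithm halts at once with trivial output.

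You instead give a self-contained Bass--Serre argument. The edges at $\tilde v$ have stabilizers $\langle a_v^{\psi(e)}\rangle$, which contain $a_v$ only when $\psi(e)=\pm1$, so $\mathrm{Fix}(a_v)=\{\tilde v\}$. Then $a_v=ga_u^kg^{-1}$ forces $g\tilde u=\tilde v$, hence $u=v$. This is essentially the geometric content behind the algorithm's immediate termination: a power $a_v^n$ fixes an edge at $\tilde v$ exactly when the corresponding label divides $n$, and for $n=1$ that requires a label $\pm1$.

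Your version has the advantage of not depending on the machinery of \cite{ACK-iso2}, and it gives a little more for free. Running the same argument with $u=v$ shows that $a_v=ga_v^kg^{-1}$ forces $g\in G_v$ and hence $k=1$. This is consistent with the paper's remark that the output is a trivial quasi-conjugacy class.

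The paper's citation is shorter and places the lemma in the qc-class framework that is used again in the proof of \Cref{thm:isomorphism-flexible}. The cited algorithm also decides full reducedness for arbitrary GBS graphs, not just this degenerate case.
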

\begin{proof}
    Use the algorithm to compute quasi-conjugacy classes described in \cite{ACK-iso2}. When no label is $\pm1$, the algorithm terminates immediately, giving as output a trivial quasi-conjugacy class.
\end{proof}

\begin{defn}\label{def:set-of-primes}
    For a GBS graph $(\Gamma,\psi)$, define its \emph{set of primes}
    $$\cP(\Gamma,\psi):=\{r\in\bbN \text{ prime } : r\divides \psi(e) \text{ for some } e\in E(\Gamma)\}.$$
\end{defn}

\begin{prop}\label{prop:invariant-set-of-primes}
    Let $(\Gamma,\psi),(\Gamma',\psi')$ be two fully reduced GBS graphs representing isomorphic GBS groups. Then $\cP(\Gamma,\psi)=\cP(\Gamma',\psi')$.
\end{prop}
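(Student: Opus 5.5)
We want to show that the set of primes is an isomorphism invariant, by characterizing it intrinsically in terms of the group $G$. The natural candidate is the modular homomorphism together with the elliptic subgroup structure. Recall that in a GBS group which is not cyclic (the non-elementary case), the elliptic elements (those conjugate into some vertex group) are intrinsically defined: they are exactly the elements whose centralizer is non-cyclic... or more robustly, by Forester's theorem, the set of elliptic elements is the same for every GBS tree structure of $G$ unless $G$ is one of a few small exceptions ($\bbZ$, $\bbZ^2$, Klein bottle group, $BS(1,n)$). I would first dispose of these exceptional cases by direct inspection of fully reduced graphs representing them. For instance, $BS(1,n)$ has fully reduced graphs that are a single vertex with a loop labelled $1,n$ (or $-1,-n$), so the primes are those of $n$; and $n$ is recovered up to sign as an invariant of the group, e.g. via the abelianization $\bbZ\times\bbZ/(n-1)$ combined with the modular homomorphism. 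Similarly, $\bbZ^2$ and the Klein bottle group have only labels $\pm1$ in fully reduced form, so the set of primes is empty.

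In the general case, I would characterize the primes as follows:
\[
r\in\cP(\Gamma,\psi)\iff \text{there exist a non-trivial elliptic } g\in G \text{ and } h\in G \text{ with } h\geqc g,\ h\not\geqc g^{r'} \text{ for suitable } r'\dots
\]
More concretely, the plan is to use the quasi-conjugacy relation $\geqc$ restricted to elliptic elements. For a vertex generator $a_v$, the set of exponents $k$ such that $a_v^k\geqc a_u$ for some vertex $u$ is determined by the graph via the algorithm of \cite{ACK-iso2}. The fully reduced hypothesis guarantees that the generators $a_v$ are pairwise non-quasi-conjugate and represent exactly the maximal quasi-conjugacy classes of elliptic elements. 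Hence the collection of pairs (class of $a_v$, set of powers $a_v^k$ and their conjugacy relations) is determined by $G$ up to the matching of vertices.

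Then I would show that a prime $r$ divides some label if and only if, for some vertex $v$ and some $k$, the element $a_v^{k}$ is conjugate to $a_{u}^{k'}$ with $r$ dividing exactly one of $k,k'$ in the $r$-adic sense appropriate to the path. Equivalently, $r$ divides some label $\psi(e)$ with $\tau(e)=v$ if and only if $a_v^{\psi(e)}$ lies in an adjacent edge group, i.e.\ iff some power $a_v^{m}$ with $r\mid m$ is a minimal power of $a_v$ that is commensurated into a different edge stabilizer. This is phrased purely in terms of the action on the Bass--Serre tree, but the tree is canonical up to the deformation space. Stabilizers of edges vary within the deformation space, yet the elliptic subgroup and the indices $[G_v:G_e]$ change only by slides, inductions, and folds. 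In fully reduced graphs only slides and inductions appear, and these preserve the multiset of primes dividing labels. Indeed, a slide multiplies a label by a ratio of labels at the same vertex, and an induction multiplies the labels around a vertex by a factor whose primes already occur. So the cleanest route is: invoke that any two fully reduced GBS graphs for the same group are related by a finite sequence of slides, inductions, and $\mathcal{A}^{\pm1}$-moves (Forester, \cite{For06}; \cite{ACK-iso1}), then verify by a case check that each move preserves $\cP$.

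The main obstacle is the $\mathcal{A}^{\pm1}$-moves and inductions, where labels are multiplied by new factors. For an induction at a vertex $v$ with a loop labelled $1,n$, the labels at $v$ get multiplied by divisors of $n$, whose primes already lie in $\cP$. For $\mathcal{A}^{\pm1}$-moves the new labels are products of existing labels, and removing a label $1$ does not delete any prime dividing other labels. I would handle these by explicitly checking how the labels transform in each move, using the standard descriptions.
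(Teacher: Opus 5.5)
\textbf{The gap is in your treatment of the exceptional groups.} The step fails, and it cannot be repaired, because the statement itself is false in that case. It is not true that fully reduced graphs for the Klein bottle group $K$ carry only labels $\pm1$. We have $K\cong\langle a,b\mid a^2=b^2\rangle$, which is the GBS graph with two vertices joined by one edge labelled $2,2$. This graph is fully reduced in the paper's sense: no label is $\pm1$, so \Cref{lem:fully-reduced} applies. Directly: $a^2=b^2$ is central, and the quotient by it is $\bbZ/2*\bbZ/2$, where the images of $a,b$ are non-conjugate involutions, so neither generator is conjugate to a power of the other. Its set of primes is $\{2\}$. The one-vertex graph with a loop labelled $1,-1$ also represents $K$ and is also fully reduced, but its set of primes is empty.

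So the proposition must be read with the standard exclusion of $K$; the elementary GBS groups are exactly those admitting GBS trees in different deformation spaces. This exclusion is presumably a standing assumption behind the citation of \cite{ACK-iso1}, which is all the paper gives as proof. Conversely, $BS(1,n)$ with $|n|\ge2$ is not exceptional: $a$ is conjugate to $a^n$, so it is elliptic in every GBS tree, and all its GBS trees share one deformation space. It needs no separate treatment.

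\textbf{Away from $K$, your final route works and differs from the paper's citation.} For $\bbZ$ and $\bbZ^2$ your inspection is fine. For non-elementary $G$ the chain is sound:
\begin{enumerate}
\item Fully reduced implies reduced, since an edge between distinct vertices with label $\pm1$ at $u$ makes $a_u$ conjugate to a power of $a_v$.
\item All GBS trees of $G$ lie in one deformation space.
\item By \cite{For06}, any two reduced graphs in that space are joined by slides, inductions and $\mathcal{A}^{\pm1}$-moves.
\item Each move preserves $\cP$.
\end{enumerate}

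Two corrections are needed. First, the intermediate graphs in step 3 are only reduced. So drop the unjustified claim that only slides and inductions occur between fully reduced graphs; what you need is invariance of $\cP$ under each move on arbitrary reduced graphs. Second, state the checks precisely. As written, a slide does not multiply a label by a ratio of labels at the same vertex, and only the set, not the multiset, of primes is preserved. The correct checks are:
\begin{itemize}
\item Sliding $e$ over $f$ with $\psi(\ol f)\mid\psi(e)$ replaces $\psi(e)$ by $\psi(e)\psi(f)/\psi(\ol f)$, while $\psi(\ol f)$ and $\psi(f)$ survive on $f$.
\item An induction at a vertex with a loop labelled $1,n$ changes the other labels there by factors whose primes divide $n$, and $n$ remains a label.
\item An $\mathcal{A}^{\pm1}$-move exchanges a vertex carrying a loop labelled $1,n$ and joined to $w$ by an edge labelled $\ell\mid n$ (at that vertex) and $k$ (at $w$) for a loop at $w$ labelled $k,kn$, or conversely. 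Both configurations involve exactly the primes of $kn$.
\end{itemize}

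The first half of your proposal, the intrinsic characterization via quasi-conjugacy, is not a usable argument as written and should simply be dropped.
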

\begin{proof}
    See \cite{ACK-iso1}.
\end{proof}

Note that \Cref{prop:invariant-set-of-primes} is false if we do not assume that the GBS graphs are fully reduced.

\subsection{Flexible GBS graphs}\label{sec:flexibility}

\begin{defn}\label{def:minimal-edge}
    Let $(\Gamma,\psi)$ be a GBS graph. An edge $e\in E(\Gamma)$ is called \emph{minimal} if there is no $e'\in E(\Gamma)$ with $\tau(e')=\tau(e)$ and $\psi(e')\divides\psi(e)$ and $\psi(e')\not=\pm\psi(e)$.
\end{defn}

In other words, $e\in E(\Gamma)$ is minimal if the label at the terminal end of $e$ is not divided properly by any other adjacent label. For example, in the particular case where $e$ is a loop with $\psi(\ol{e})\divides\psi(e)$ and $\psi(\ol{e})\not=\pm\psi(e)$, we have that $e$ is not minimal, but $\ol{e}$ might be minimal (depending on the labels of the other edges near the same vertex).

\begin{defn}\label{def:flexible-GBS}
    A GBS graph $(\Gamma,\psi)$ is called \emph{flexible} if it satisfies the following three conditions:
    \begin{enumerate}
        \item\label{itm:fully-red} $(\Gamma,\psi)$ is fully reduced.
        \item\label{itm:no-min-min} There is no edge $e\in E(\Gamma)$ such that both $e,\ol{e}$ are minimal.
        \item\label{itm:full-supp-gaps} For every two distinct edges $e,e'\in E(\Gamma)$ with $\tau(e)=\tau(e')$, if $\psi(e)\divides\psi(e')$ then the quotient $\frac{\psi(e')}{\psi(e)}$ is multiple of all the primes in $\cP(\Gamma,\psi)$.
    \end{enumerate}
\end{defn}

The condition of \Cref{itm:full-supp-gaps} is that, whenever two labels near the same vertex divide each other, all primes of $\cP(\Gamma,\psi)$ appear in the factorization of the quotient, each with exponent at least $1$. The reader can think of it as the quotient having ``full support"; this is the kind of condition that we expect to happen with high probability in our random setting.

It is easy to see that, for a GBS graph, we can algorithmically decide whether it is flexible or not. The key feature of flexible GBS graphs is that, using the results of \cite{ACK-iso2}, we can decide the isomorphism problem among them.

\begin{thm}\label{thm:isomorphism-flexible}
There is an algorithm that, given two flexible GBS graphs, determines whether or not they represent isomorphic GBS groups.
\end{thm}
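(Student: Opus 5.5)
The plan is to reduce \Cref{thm:isomorphism-flexible} to the decidability results of \cite{ACK-iso2}, by checking that flexible GBS graphs lie in a class where that paper's algorithm is known to terminate and answer correctly. First, both inputs are fully reduced by \Cref{itm:fully-red}. If they represent isomorphic groups, \Cref{prop:invariant-set-of-primes} gives the same set of primes, and full reducedness makes $\abs{V(\Gamma)}$ and the number of edges isomorphism invariants. So the algorithm begins by comparing $\cP$, the number of vertices and the number of edges, and answers ``no'' at once if any of these differ. After this step we may assume both graphs share the same set of primes $\cP$ and the same combinatorial size.

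Next, following \cite{ACK-iso1}, I would encode each GBS graph by the exponent vectors of its labels, i.e.\ vectors in $\bbN^{\abs{\cP}}$ together with signs. Isomorphic fully reduced GBS graphs are connected by finite sequences of slide moves, together with possibly swaps and connections in the non-fully-reduced intermediate stages. A slide of the edge $e'$ over $e$ at a common vertex requires $\psi(e)\divides\psi(e')$, and it multiplies or divides the far label by the quotient. The role of \Cref{itm:full-supp-gaps} is that every such available quotient has full support on $\cP$. Hence slides never create ``partial support'' phenomena, which are exactly the source of the difficulty in \cite{ACK-iso2}. The role of \Cref{itm:no-min-min} is that every edge has at least one non-minimal end. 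An edge can therefore always be slid along an edge carrying a minimal label, which gives the flexibility needed to move edges around the graph freely.

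The core step is to verify that, under \Cref{itm:no-min-min} and \Cref{itm:full-supp-gaps}, the orbit of a GBS graph under slide moves admits a computable normal form, or at least a decidable reachability criterion. This would be read off from the characterization in \cite{ACK-iso2}, which describes reachable configurations via linear and congruence invariants of the exponent vectors. I would then check that flexibility is preserved along slides, or at least that flexible targets are recognized. Since flexibility is algorithmically checkable, the algorithm can restrict attention to the invariants computed in \cite{ACK-iso2} and compare them.

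I expect the main obstacle to be matching the hypotheses of the relevant theorem in \cite{ACK-iso2} precisely to \Cref{def:flexible-GBS}. In particular, one must confirm that \Cref{itm:no-min-min} is exactly what guarantees that the ``minimal'' edges cannot be frozen, so that the invariants of \cite{ACK-iso2} are complete. I would try first to show directly that any edge with a non-minimal end can be slid to realize arbitrary full-support multiples, reducing everything to the invariants of \cite{ACK-iso2}.
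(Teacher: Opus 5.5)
\textbf{Gap: the verification that the reduction to \cite{ACK-iso2} needs is missing.} Reducing to \cite{ACK-iso2} is the right overall strategy, but your ``core step'' is deferred to an unspecified characterization in that paper. You never identify which of its hypotheses must be checked, let alone check them.

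The results of \cite{ACK-iso2} apply to GBS graphs that have \emph{one quasi-conjugacy class} and \emph{full-support gaps}. The whole proof of \Cref{thm:isomorphism-flexible} consists in verifying these two properties for a flexible graph.
\begin{itemize}
\item Full-support gaps is immediate from \Cref{itm:full-supp-gaps}.
\item One qc-class is where \Cref{itm:no-min-min} is really used. The mechanism is not ``sliding edges freely along minimal ones'' but an iteration. If $\ol{e}_0$ is minimal, then $e_0$ is not. So there is a minimal $\ol{e}_1$ with $\iota(e_1)=\tau(e_0)$ and $\psi(\ol{e}_1)$ properly dividing $\psi(e_0)$. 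By \Cref{itm:full-supp-gaps}, the quotient is divisible by every prime of $\cP(\Gamma,\psi)$.
\item Repeating gives an infinite path $e_0,e_1,e_2,\dots$. Finiteness of $\Gamma$ forces $e_i=e_j$ for some $i<j$. The resulting closed path conjugates the edge group of $e_0$ onto a proper power of itself, whose exponent is divisible by all primes of $\cP(\Gamma,\psi)$.
\item Every edge lies above some minimal edge, so every edge lies in a qc-class of full qc-support. This forces a single qc-class.
\end{itemize}

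\textbf{The rest of your plan does not fill this gap.} Comparing $\cP$, the number of vertices and the number of edges is a harmless preliminary filter, but it decides nothing.

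Your fallback of checking that flexibility is preserved along slides would fail. So does your claim that slides never create partial-support phenomena. \Cref{itm:full-supp-gaps} is a condition on the specific labels and is not slide-invariant. Sliding $f$ over $e$ places the label $\psi(\ol{e})\cdot\psi(f)/\psi(e)$ at $\iota(e)$, where it can acquire a divisibility relation with a quotient that misses a prime. For example, with two primes, suppose $\iota(e)$ carries exponent vectors $(5,3)$ for $\ol{e}$ and $(2,6)$ for another edge. A full-support factor $(1,3)$ produces $(6,6)$, which is divisible by $(2,6)$ with quotient $(4,0)$.

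What must be handed to \cite{ACK-iso2} are properties of the input graphs themselves. You do not need a normal form for slide orbits, which you would otherwise have to construct.
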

\begin{proof}
    We take a flexible GBS graph $(\Gamma,\psi)$, and we want to prove that it has one qc-class and full-support gaps (see \cite{ACK-iso2} for the definitions). If we do this, then the conclusion follows from the results of \cite{ACK-iso2}.

    First, we show that $(\Gamma,\psi)$ has one qc-class. Let $\ol{e}_0\in E(\Gamma)$ be a minimal edge. By \Cref{itm:no-min-min} of \Cref{def:flexible-GBS} we have that $e_0$ is not minimal, and thus we can find $\ol{e}_1\in E(\Gamma)$ minimal such that $\tau(e_0)=\iota(e_1)$ and $\psi(\ol{e}_1)\divides \psi(e_0)$. Note that the quotient $\psi(e_0)/\psi(\ol{e}_1)$ is multiple of all primes in $\cP(\Gamma,\psi)$, by \Cref{itm:full-supp-gaps} of \Cref{def:flexible-GBS}. Similarly, by induction, we  obtain an infinite path $e_0,e_1,e_2,\dots$ in $\Gamma$ such that $\psi(e_i)/\psi(\ol{e}_{i+1})$ is an integer multiple of all primes in $\cP(\Gamma,\psi)$. But then we must have $e_i=e_j$ for some $i<j$, and this means the path $e_0,e_1,\dots,e_{j-2},e_{j-1},\ol{e}_i,\ol{e}_{i-1},\dots,\ol{e}_1,\ol{e}_0$ conjugates the edge-stabilizer of $e_0$ to a proper power of itself; and this proper power uses all primes in $\cP(\Gamma,\psi)$.

    Using the terminology of \cite{ACK-iso2}, this means that the quasi-conjugacy class containing $e_0$ has quasi-conjugacy support equal to the whole $\cP(\Gamma,\psi)$. Since every edge lies above some minimal edge, we have that every edge is contained in a quasi-conjugacy class with quasi-conjugacy support equal to the whole $\cP(\Gamma,\psi)$. This forces all edges to belong to a common quasi-conjugacy class, and thus $(\Gamma,\psi)$ has one qc-class.

    Now we show that $(\Gamma,\psi)$ has full-support gaps. This follows immediately from the definition of full-support gaps \cite{ACK-iso2} and from \Cref{itm:full-supp-gaps} of \Cref{def:flexible-GBS}.
\end{proof}

\subsection{Rigid GBS graphs}

\begin{defn}\label{def:rigid-GBS}
    A GBS graph $(\Gamma,\psi)$ is called \emph{rigid} if there are no distinct edges $e,e'\in E(\Gamma)$ such that $\tau(e)=\tau(e')$ and $\psi(e)\divides \psi(e')$.
\end{defn}

A rigid GBS admits a unique fully reduced labeled graph representing it (up to signs), and in particular the isomorphism problem is decidable for GBS groups within this class \cite{CF08}.

\section{Random GBS graphs}

In order to model a \emph{random} GBS graph, we shall start by introducing an appropriate notion of a random graph and then prescribe a way to assign random labels to the endpoints of its edges. We intend to fix the number $V$ of vertices and the number $E$ of topological edges (so that, in our previous notation, we will have $|E(\Gamma)|=2E$); later, we shall discuss how to construct the function $\psi$ by selecting $2E$ random labels in such a way as to obtain meaningful results. 

One classical, extremely well-studied model for a finite random graph with a fixed number of vertices $n$ and a fixed number of edges $m$ is the \emph{Erdős–Rényi random graph} $G(n,m)$; this is the graph with vertex set $[n]=\{1,\ldots,n\}$, whose (topological) edges are given by a list of $m$ unordered pairs $\{a_1,b_1\},\ldots,\{a_m,b_m\}$, each pair being chosen uniformly at random within the set ${[n] \choose 2}$ of all possible such pairs, conditionally on the $m$ pairs being all different~\cite{ER60}. Many results are known about the connectivity of such random graphs (as well as several of their features); in particular, there is a phase transition happening around the threshold of $m$ being of order $\frac12n\log n$: for large $n$, the graph goes from being disconnected with high probability (due to the existence of isolated vertices) to being connected with high probability (see~\cite{BBook}).

For our setting, the Erdős–Rényi random graph itself is not well suited, as it is usually built to be simple, whereas our GBS model should allow for loops and multiple edges. We shall therefore make use of the following natural variant, which sometimes goes by \emph{independent edge assignment} (IEA) in the social network literature.

For us, a \emph{random graph} with $V$ vertices and $E$ edges will be a random multigraph on the vertex set $[V]=\{1,\ldots, V\}$ whose (topological) edges are $(\{v_{2i-1},v_{2i}\})_{i=1}^E$, where $(v_i)_{i=1}^{2E}$ are independent, identically distributed random variables, each of them uniformly distributed in $[V]$. Any singletons appearing in the list of edges represent loops.

Although this model is in many ways analogous to the random graph $G(n,m)$ and has similar connectivity properties that could be deduced along the lines of~\cite[Chapter 7]{BBook}, since it is less commonly discussed in the literature, we will give a brief self-contained account of the properties we will use in order to establish the flexibility of a corresponding GBS graph.

\subsection{Connectivity and valences of vertices in a random graph}

The following proposition estimates the probability that a random graph is connected. This is a necessary condition for the random GBS group to be well-defined.

Note that one could achieve even stronger results (it is still true that a phase transition is happening around the threshold of $E\approx \frac12V\log V$, as in simple models); however, we shall later require much stronger lower bounds for $E$, so that we find it convenient to present here a weaker bound with a short proof.

\begin{prop}\label{prop:probability-connected}
Let $\Gamma$ be a random graph with $V$ vertices and $E$ edges, and fix $\delta\in (0,1)$. If $E\geq V(\log V+\log(2/\delta))$, then $\Gamma$ is connected with probability at least $1-\delta$.
\end{prop}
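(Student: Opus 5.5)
We use a union bound over vertex subsets that would separate the graph. If $\Gamma$ is disconnected, there is a nonempty proper subset $S\subset[V]$ with no edge between $S$ and its complement. For such an $S$ we may always choose the side with $1\le |S|=k\le V/2$.

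\textbf{Bounding a fixed cut.} Fix $S$ with $|S|=k$. An edge $\{v_{2i-1},v_{2i}\}$ crosses the cut $(S,[V]\setminus S)$ with probability $2\frac{k}{V}\frac{V-k}{V}$. Since $k\le V/2$, we have $\frac{V-k}{V}\ge\frac12$, so this probability is at least $\frac kV$. The $E$ edges are independent, hence
\[
\Pr(\text{no edge crosses } S)\le\left(1-\frac{k}{V}\right)^{E}\le e^{-kE/V}.
\]

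\textbf{Union bound.} Summing over all subsets of size $k\le V/2$ and using $\binom{V}{k}\le V^k$,
\[
\Pr(\Gamma \text{ disconnected})\le\sum_{k=1}^{\lfloor V/2\rfloor}\binom{V}{k}e^{-kE/V}\le\sum_{k\ge1}\left(Ve^{-E/V}\right)^k.
\]
The hypothesis $E\ge V(\log V+\log(2/\delta))$ gives $Ve^{-E/V}\le \delta/2\le\frac12$. Summing the geometric series, the total is at most $\frac{\delta/2}{1-\delta/2}\le\delta$, since $\delta<1$. This is the claimed bound.

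\textbf{Main obstacle.} The only delicate point is getting a crossing probability of order $k/V$ rather than a weaker bound. This is exactly what restricting to $k\le V/2$ achieves. It also lets the crude estimate $\binom Vk\le V^k$ be absorbed termwise into the geometric series, so no finer binomial estimates are needed.
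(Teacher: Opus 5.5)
Your proof is correct and follows the paper's argument: a union bound over cuts with $1\le |S|\le V/2$, using $\frac{2s(V-s)}{V^2}\ge \frac{s}{V}$ to reduce everything to a series in $Ve^{-E/V}\le \delta/2$. The only difference is cosmetic. You use $\binom{V}{k}\le V^k$ and sum a geometric series to get $\frac{\delta/2}{1-\delta/2}\le\delta$, whereas the paper uses $\binom{V}{s}\le V^s/s!$ and sums an exponential series to get $e^{\delta/2}-1\le\delta$.
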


\begin{proof}
Let $S$ be a proper subset of the set of vertices of $\Gamma$ having cardinality $0<s<V$. The probability that there is no edge of $\Gamma$ connecting a vertex in $S$ to a vertex outside of $S$ is $\left(1-\frac{2s(V-s)}{V^2}\right)^E$.
In order to estimate the probability that $\Gamma$ is disconnected, we can just take a union bound over all subsets $S$:
\[\Pr(\Gamma\mbox{ is disconnected})\leq \frac12\sum_{\emptyset\neq S\subsetneq V}\Pr(\mbox{no edge between $S$ and $[V]\setminus S$});\]
this yields
\[\Pr(\Gamma\mbox{ is disconnected})\leq \sum_{s=1}^{\lfloor V/2\rfloor}{V \choose s}\left(1-\frac{2s(V-s)}{V^2}\right)^E\leq \sum_{s=1}^{\lfloor V/2\rfloor}\frac{(V\exp(-E/V))^s}{s!},\]
by using the fact that ${V \choose s}\leq \frac{V^s}{s!}$ and that $s\leq V/2$. By finally extending the sum to all positive integers $s$, under the assumption that $E\geq V(\log V+c)$, we get that the probability of $\Gamma$ being disconnected is at most $\sum_{s=1}^{\infty}\frac{e^{-cs}}{s!}= e^{e^{-c}}-1$. If $c\geq \log(2/\delta)$, then the probability that $\Gamma$ is disconnected is at most $\delta$.
\end{proof}

The following lemma shows that, with high probability, the valence of each vertex is not too far from its expected value of $2E/V$. This is a standard estimate, but for completeness we include a short proof.

\begin{lem}\label{lem:estimate-valences}
Let $\Gamma$ be a random graph with $V$ vertices and $E$ edges. Assuming $E\geq V(\log V+\log(1/\delta))$, the probability that every vertex has valence at least $\frac{1}{6}\frac{2E}{V}$ is at least $1-\delta$.
\end{lem}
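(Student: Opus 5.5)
The plan is a Chernoff lower-tail bound for the valence of a single vertex, followed by a union bound over the $V$ vertices. Fix a vertex $v\in[V]$. The valence of $v$ is the number of ends of edges at $v$, i.e. the number of indices $i\in\{1,\dots,2E\}$ with $v_i=v$ (a loop at $v$ contributes $2$, consistent with $|E(\Gamma)|=2E$). Hence the valence is $X_v=\sum_{i=1}^{2E}\mathbf 1_{\{v_i=v\}}$, a sum of $2E$ independent Bernoulli$(1/V)$ variables, so $X_v\sim\mathrm{Bin}(2E,1/V)$ with mean $\mu=2E/V$.

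Apply the multiplicative Chernoff bound for the lower tail, $\Pr(X\le(1-\eta)\mu)\le\exp(-\mu\eta^2/2)$, with $1-\eta=\frac16$, i.e. $\eta=\frac56$. This gives
\[
\Pr\Bigl(X_v<\tfrac16\mu\Bigr)\le\exp\Bigl(-\tfrac{25}{72}\cdot\tfrac{2E}{V}\Bigr)=\exp\Bigl(-\tfrac{25}{36}\cdot\tfrac{E}{V}\Bigr).
\]
The constant $\frac{25}{36}<1$ falls short of what we need: we want a bound of the form $\exp(-E/V)$, since then the hypothesis $E\ge V(\log V+\log(1/\delta))$ gives $E/V\ge\log(V/\delta)$ and each vertex fails with probability at most $\delta/V$.

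To fix the constant, I will use the sharper relative-entropy form of the Chernoff bound, $\Pr(X\le a\mu)\le\exp(-\mu(a\log a-a+1))$ for $0<a<1$. With $a=\frac16$ the exponent is
\[
\mu\Bigl(1-\tfrac16-\tfrac16\log 6\Bigr)\approx\mu(0.8333-0.2986)=0.5347\,\mu.
\]
Since $\mu=2E/V$, this equals about $1.069\,E/V\ge E/V$. Therefore
\[
\Pr\Bigl(X_v<\tfrac{1}{6}\cdot\tfrac{2E}{V}\Bigr)\le e^{-E/V}\le e^{-\log V-\log(1/\delta)}=\frac{\delta}{V}.
\]

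A union bound over the $V$ vertices then shows that some vertex has valence below $\frac16\frac{2E}{V}$ with probability at most $\delta$, which proves the lemma. The only delicate point is the one above: the crude quadratic Chernoff bound loses the constant, so the entropy version is needed, which works precisely because $2\bigl(\frac56-\frac{\log 6}{6}\bigr)\ge1$.

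If a fully elementary argument is preferred, the same estimate can be derived directly. For $t>0$, Markov's inequality gives
\[
\Pr(X_v\le a\mu)\le e^{ta\mu}\,\mathbb E\bigl[e^{-tX_v}\bigr]=e^{ta\mu}\bigl(1-\tfrac{1-e^{-t}}{V}\bigr)^{2E}\le\exp\bigl(ta\mu-\mu(1-e^{-t})\bigr),
\]
and choosing $t=\log 6$ recovers exactly the exponent computed above.
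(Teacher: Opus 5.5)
Your proof is correct and is essentially the paper's argument: the paper applies the exponential-moment (Chernoff) bound to $X\sim\mathrm{Bin}(2E,1/V)$ with $t=\log 6$, which gives the exponent $\frac{2E}{V}\bigl(1-\frac16-\frac{\log 6}{6}\bigr)\ge\frac{E}{V}$, and then takes a union bound over the $V$ vertices. Your relative-entropy bound with $a=\frac16$ and your elementary derivation at the end are exactly this computation.
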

\begin{proof}
     We set $a=6$ and $t=\log a$, so that $(1-e^{-t}-\frac{t}{a})\ge\frac{1}{2}$.

    Let $X_1,\dots,X_{2E}$ be independent Bernoulli random variables of parameter $\frac1V$, which take the value $1$ with probability $\frac1V$ and the value $0$ with probability $\frac{V-1}{V}$. For our model of a random (multi)graph, the valence of a fixed vertex has the same distribution as $X=X_1+\dots+X_{2E}$. Thus we obtain the following Chernoff-type bound:
    \[
        \bbP\left(X\le\frac{2E}{aV}\right)
        = \bbP\left(e^{-tX}\ge e^{-t\frac{2E}{aV}}\right)
        \le \frac{\bbE[e^{-tX}]}{e^{-t\frac{2E}{aV}}}
        = e^{\frac{2E}{V}\frac{t}{a}}\left(\bbE[e^{-tX_1}]\right)^{2E} =
    \]
    \[
        = e^{\frac{2E}{V}\frac{t}{a}}\left(1-\frac{1}{V}+\frac{1}{V}e^{-t}\right)^{2E}
        \le e^{\frac{2E}{V}\frac{t}{a}}e^{2E\left(-\frac{1}{V}+\frac{1}{V}e^{-t}\right)}
        = e^{-\frac{2E}{V}(1-e^{-t}-\frac{t}{a})}
        \le e^{-\frac{E}{V}}.
    \]
    Thus the probability that a fixed vertex has valence $\le\frac{2E}{6V}$ is at most $e^{-\frac{E}{V}}$.
    
    Therefore, by a union bound, the probability that some vertex has valence at most $\frac{2E}{6V}$ is at most $Ve^{-\frac{E}{V}}$. Now, assuming $E\geq V(\log V+\log(1/\delta))$, the probability that there is a vertex whose valence is at most $\frac{1}{6}\frac{2E}{V}$ becomes upper bounded by $Ve^{-\frac{E}{V}}\le Ve^{-\log V-\log(1/\delta)}=\delta$.
\end{proof}

\subsection{Flexibility of a random GBS graph}\label{sec:lemmas}
We now introduce the way we intend to sample a random GBS graph throughout this paper.

Having fixed positive integers $V$ (the number of vertices), $E$ (the number of topological edges), as well as a finite set $\mathcal{P}$ of primes and a positive integer $N$, we sample a random GBS graph by taking a random graph $\Gamma$ with $V$ vertices and $E$ edges as in the previous subsection. Independently of the graph $\Gamma$, we consider $2E\cdot D$ i.i.d.~random variables $(\alpha_{i,j})_{i\in[2E], j\in [D]}$, each uniformly distributed in the set $\{0,1,\ldots, N-1\}$, as well as $2E$ independent Bernoulli random variables of parameter $1/2$ (coin flips) $(\epsilon_i)_{i=1}^{2E}$. Letting $\mathcal{P}=\{p_1,\ldots, p_D\}$ and letting $(e_i)_{i=1}^{2E}$ be the elements of $E(\Gamma)$, we attach the label $(-1)^{\epsilon_i}\prod_{j=1}^D p_j^{\alpha_{i,j}}$ to the endpoint $\tau(e_i)$ of the (oriented) edge $e_i$.

As the flexibility of the GBS graph does not depend on the primes $p_1, \ldots, p_D$, we shall simply refer to the random object we just described as \emph{a random GBS graph with parameters $V$, $E$, $D$, $N$}, and always assume that some set of primes of cardinality $D$ has been fixed.

Note that, in order for the GBS group to be well-defined, we need the GBS graph to be connected, whereas this is not necessarily the case in this model. However, it will be convenient to avoid the confusion that comes with building a conditioning into the definition; in any case, in the regimes we are interested in, we have already proved our random underlying graph to be connected with high probability.

In what follows, \Cref{lem:itm-fully-red} and \Cref{lem:itm-full-supp-gaps} give estimates for the probabilities that a random GBS graph satisfies \Cref{itm:fully-red,itm:full-supp-gaps} of \Cref{def:flexible-GBS}. Both bounds are easy to prove under appropriate hypotheses: when the exponents are chosen between $0$ and $N-1$ with $N$ large enough, they will all be distinct and will yield no labels equal to $\pm1$ with high probability.

\begin{lem}\label{lem:itm-fully-red}
    A random GBS graph with parameters $V,E,D,N$ 
    satisfies \Cref{itm:fully-red} of \Cref{def:flexible-GBS} with probability at least $1-\frac{4E}{N^D}$.
\end{lem}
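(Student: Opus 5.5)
By \Cref{lem:fully-reduced}, it suffices to bound the probability that some label equals $\pm1$. A label $(-1)^{\epsilon_i}\prod_{j=1}^D p_j^{\alpha_{i,j}}$ equals $\pm1$ exactly when $\alpha_{i,1}=\dots=\alpha_{i,D}=0$. The sign $\epsilon_i$ plays no role here.

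First I compute the probability for a single label. The exponents $\alpha_{i,j}$ are independent and each is uniform in $\{0,\dots,N-1\}$. So the probability that all $D$ of them vanish is exactly $N^{-D}$. This event depends only on the labels, so the distribution of the underlying graph $\Gamma$ is irrelevant.

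Next I take a union bound over the $2E$ oriented edges $e_1,\dots,e_{2E}$. The probability that at least one label equals $\pm1$ is at most $2E\cdot N^{-D}$. On the complementary event, every label differs from $\pm1$, and \Cref{lem:fully-reduced} shows that $(\Gamma,\psi)$ is fully reduced. This gives probability at least $1-\frac{2E}{N^D}\ge 1-\frac{4E}{N^D}$, which is the bound claimed.

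There is no real obstacle. The only point needing care is that \Cref{lem:fully-reduced} holds for every graph $\Gamma$, including disconnected ones; its proof via the quasi-conjugacy algorithm of \cite{ACK-iso2} does not use connectivity. The stated constant $4E$ is weaker than what this argument gives, so some slack is left over.
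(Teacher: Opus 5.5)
Your proof is correct and follows the paper's own argument: you use \Cref{lem:fully-reduced} to reduce to the event that no label equals $\pm1$, and each label equals $\pm1$ with probability $N^{-D}$. The only difference is cosmetic: you take a union bound over the $2E$ labels, while the paper uses independence to write the probability as $(1-N^{-D})^{2E}\ge 1-\frac{4E}{N^D}$; both give the stated bound with room to spare.
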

\begin{proof}
    We use \Cref{lem:fully-reduced}: a random label is different from $\pm1$ with probability $1-\frac{1}{N^D}$, and we are choosing $2E$ such labels independently. But we have $(1-\frac{1}{N^D})^{2E}\ge 1-\frac{4E}{N^D}$ and thus we are done.
\end{proof}

\begin{lem}\label{lem:itm-full-supp-gaps}
    Consider a random GBS graph with parameters $V, E, D, N$ such that $N\ge 4E$. The probability that for all $j\in [D]$ the exponents $(\alpha_{i,j})_{i\in[2E]}$ are all distinct is at least $1-\frac{4E^2D}{N}$. In particular, with probability at least $1-\frac{4E^2D}{N}$ the random GBS graph satisfies \Cref{itm:full-supp-gaps} of \Cref{def:flexible-GBS}.
\end{lem}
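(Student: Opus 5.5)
The argument has two parts: a union bound over coincidences of exponents, followed by a deterministic check that distinct exponents force the full-support condition of \Cref{def:flexible-GBS}.

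\textbf{Probability bound.} Fix a prime index $j\in[D]$ and two distinct label positions $i\neq i'$ in $[2E]$.

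\begin{itemize}
\item Since $\alpha_{i,j}$ and $\alpha_{i',j}$ are independent and uniform on $\{0,\dots,N-1\}$, we have $\Pr(\alpha_{i,j}=\alpha_{i',j})=\frac1N$.
\item There are $\binom{2E}{2}\le 2E^2$ unordered pairs $\{i,i'\}$ and $D$ choices of $j$.
\item A union bound gives that some coincidence occurs with probability at most $\frac{2E^2D}{N}\le\frac{4E^2D}{N}$.
\end{itemize}

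So all exponents are distinct for every $j$ with probability at least $1-\frac{4E^2D}{N}$. The hypothesis $N\ge 4E$ is not needed here; it only keeps the bound meaningful. Alternatively one could bound the probability of distinctness from below by $\prod_{k<2E}(1-k/N)$, but the union bound is simpler.

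\textbf{Deterministic step.} Assume all exponents are distinct for every $j\in[D]$.

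\begin{itemize}
\item Take distinct edges $e,e'\in E(\Gamma)$ with $\tau(e)=\tau(e')$ and $\psi(e)\divides\psi(e')$. Let $i,i'$ be their indices; these are distinct because $e\ne e'$.
\item Divisibility gives $\alpha_{i,j}\le\alpha_{i',j}$ for every $j$.
\item Distinctness upgrades this to $\alpha_{i,j}<\alpha_{i',j}$. Hence every prime $p_j$ appears in the quotient $\psi(e')/\psi(e)$ with exponent at least $1$.
\end{itemize}

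It remains to compare with $\cP(\Gamma,\psi)$. By \Cref{def:set-of-primes}, $\cP(\Gamma,\psi)$ is contained in $\{p_1,\dots,p_D\}$, and it can only be smaller if some prime never appears. In either case the quotient is a multiple of every prime in $\cP(\Gamma,\psi)$, so \Cref{itm:full-supp-gaps} of \Cref{def:flexible-GBS} holds.

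\textbf{Main obstacle.} There is essentially none. The only care needed is that \Cref{itm:full-supp-gaps} concerns distinct oriented edges. Those correspond to distinct label indices $i\neq i'$, and this is what lets distinctness of exponents apply.
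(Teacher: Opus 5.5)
Your proof is correct, but you obtain the probability bound differently from the paper. The paper fixes $j$ and writes the exact probability that the $2E$ exponents are distinct as $\prod_{k=1}^{2E-1}\bigl(1-\frac{k}{N}\bigr)$. It bounds each factor using $1-x\ge e^{-2x}$ on $[0,1/2]$, which is the only place where $N\ge 4E$ is used, to get $e^{-4E^2/N}$. Independence across the $D$ primes then gives $e^{-4E^2D/N}\ge 1-\frac{4E^2D}{N}$.

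Your union bound over the $D\binom{2E}{2}$ possible coincidences is simpler. It drops the hypothesis $N\ge 4E$ entirely and gives the sharper failure bound $\frac{E(2E-1)D}{N}$. The paper's multiplicative intermediate bound $e^{-4E^2D/N}$ has one advantage: it stays a positive lower bound even when $4E^2D/N\ge 1$, where the union bound is vacuous. The paper discards this by linearizing at the end.

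You also spell out the deterministic implication, which the paper leaves implicit in its ``in particular'': distinct exponents force every $p_j$ to divide $\psi(e')/\psi(e)$, and $\cP(\Gamma,\psi)\subseteq\{p_1,\dots,p_D\}$.
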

\begin{proof}
    We are choosing $D$ exponents for each of the $2E$ labels independently and uniformly at random. Each exponent is an integer in $\{0,\dots,N-1\}$. Assuming that $N\geq 4E$, for each $j\in[D]$, the probability that all exponents are distinct is equal to 
    \[
        \left(1-\frac{1}{N}\right)\left(1-\frac{2}{N}\right)\dots\left(1-\frac{2E-1}{N}\right)
        \ge e^{-2\frac{1}{N}}e^{-2\frac{2}{N}}\dots e^{-2\frac{2E-1}{N}} =
    \]
    \[
        = e^{-\frac{2E(2E-1)}{N}}\ge e^{-\frac{4E^2}{N}}
    \]
    where we used that $(1-x)\ge e^{-2x}$ for $0\le x\le 1/2$. 
    Repeating the same reasoning for all $D$ primes, the probability that, for each $j\in[D]$, the exponents $(\alpha_{i,j})_{i\in [2E]}$ are all distinct is at least
    \[
        \left(e^{-\frac{4E^2}{N}}\right)^D\ge 1-\frac{4E^2D}{N}.
    \]
\end{proof}

We now focus on the condition stated in \Cref{itm:no-min-min} of \Cref{def:flexible-GBS}, which is the most substantial. In order to establish whether there exist edges that are minimal in both orientations according to \Cref{def:minimal-edge}, we first need to  estimate how many labels that are minimal for the divisibility relation end up attached to a single endpoint vertex. Note that, given some vertex in a random GBS graph, if we condition it on having valence $k$ (where loops count twice), the $k$ labels it will have attached are of the form $\ell_i=(-1)^{\epsilon_i}\prod_{j=1}^Dp_j^{\alpha_{i,j}}$, for $i\in [k]$, where the random variables $\epsilon_i$ and $\alpha_{i,j}$ are all independent (the former uniform in $\{0,1\}$ and the latter uniform in $\{0,\ldots,N-1\}$). 

Since the signs and the choice of primes are irrelevant for the purpose of establishing divisibility, we may simply consider for each label the vector $(\alpha_{i,j}+1)_{j=1}^D\in [N]^D$, where the shift is just to simplify notation. Given two vectors $X,Y\in \mathbb{R}^D$, we write $X\preceq Y$ if $X(j)\leq Y(j)$ for all $j\in [D]$ and $X\prec Y$ if $X\preceq Y$ and there is $j\in [D]$ such that $X(j)<Y(j)$. This way, the label $\ell_i$ divides the label $\ell_{i'}$ if and only if the corresponding vectors $X_i$, $X_{i'}$ are such that $X_i\preceq X_{i'}$.

Given a set of vectors $S\subseteq \mathbb{R}^D$ and a vector $X\in S$, we shall say that $X$ is \emph{minimal} (in $S$) if there is no point $Y\in S$ such that $Y\prec X$. First of all, we shall sample $2E$ points independently, each uniformly distributed in $[0,1]^D$; we shall index them as $(Y_{i,j})_{i\in [V],j\in[k_i]}$, where the numbers $k_i$ (which will eventually represent the valences of vertices) are positive integers such that $k_1+\ldots+k_V=2E$. Fixing the partition $S_i=\{Y_{i,j}\mid j\in [k_i]\}$, we intend to estimate the probability that the number of points that are minimal in their set $S_i$ is large. Subsequently, from this simple continuous model we shall deduce an estimate for the number of minimal labels in a random GBS graph.

\begin{prop}\label{prop:minimal-continuous-estimate}Having fixed $V, E, D$, let $k_1,\ldots, k_V$ be positive integers such that $k_1+\ldots+k_V=2E$. Fix $\delta\in (0,1)$ and set $L=\log(4E/\delta)$. Let $(Y_{i,j})_{i\in [V],j\in[k_i]}$ be independent random variables, uniformly sampled in $[0,1]^D$, and set $S_i=\{Y_{i,j}\mid j\in [k_i]\}$. For $i\in [V]$, let $M_i=|\{j\in [k_i]\mid Y_{i,j}\mbox{ minimal in }S_i\}|$ and set $M=\sum_{i=1}^V M_i$. Assuming that $\min_{i\in [V]}k_i> 2L$ and that $E\geq e^{2D}$, we have
$$\mathbb{P}\left(M>8LV\frac{(\log E)^{D-1}}{(D-1)!}+2\log(2/\delta)\right)\leq \delta.$$
\end{prop}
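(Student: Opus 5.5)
The plan is to dominate $M$ by a sum of independent indicators. The idea is that a point can only be minimal if the box below it has small volume; otherwise some other point of its set would lie in that box. Counting points with small box-volume then gives a binomial-type variable that Chernoff controls.

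\emph{Step 1 (minimal points have small box volume).} For $Y\in[0,1]^D$ write $\pi(Y)=\prod_{j=1}^D Y(j)$, the volume of the box $[0,Y]$. For each $i$ set $\tau_i=2L/k_i$, which is $<1$ because $k_i>2L$. Fix $(i,j)$ and condition on $Y_{i,j}$. Given $\pi(Y_{i,j})\ge\tau_i$, the probability that $Y_{i,j}$ is minimal in $S_i$ is at most
\[
(1-\tau_i)^{k_i-1}\le e^{-\tau_i(k_i-1)}\le e^{-\tau_i k_i/2}=e^{-L}=\frac{\delta}{4E}.
\]
Here we used $k_i-1\ge k_i/2$. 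A union bound over all $2E$ points shows that, with probability at least $1-\delta/2$, every minimal point $Y_{i,j}$ satisfies $\pi(Y_{i,j})<\tau_i$. On this event
\[
M\le X:=\sum_{i,j}\mathbf 1\{\pi(Y_{i,j})<\tau_i\},
\]
and $X$ is a sum of independent Bernoulli variables.

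\emph{Step 2 (bounding $\mathbb E X$).} For a uniform point of $[0,1]^D$, $-\log \pi(Y)$ is a sum of $D$ i.i.d. exponentials, so it has a Gamma distribution. This gives
\[
\Pr(\pi(Y)<\tau)=\tau\sum_{m=0}^{D-1}\frac{(\log(1/\tau))^m}{m!}.
\]
Since $1/\tau_i=k_i/(2L)\le 2E/2\le E$ (using $L\ge1$), we may replace $\log(1/\tau_i)$ by $\log E$. Because $\log E\ge 2D$, consecutive terms of $\sum_{m\le D-1}(\log E)^m/m!$, read from the top index downwards, shrink by a factor of at least $2$. Hence the sum is at most $2(\log E)^{D-1}/(D-1)!$. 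Therefore
\[
\mathbb E X\le\sum_i k_i\tau_i\cdot\frac{2(\log E)^{D-1}}{(D-1)!}=\mu:=4LV\frac{(\log E)^{D-1}}{(D-1)!}.
\]

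\emph{Step 3 (concentration).} Use the exponential moment with $t=1$. For independent Bernoulli variables, $\mathbb E e^{X}\le e^{(e-1)\mathbb E X}\le e^{(e-1)\mu}$, so by Markov
\[
\Pr\bigl(X>2\mu+a\bigr)\le e^{(e-1)\mu-2\mu-a}\le e^{-a}.
\]
Taking $a=2\log(2/\delta)$ (in fact $a=\log(2/\delta)$ already suffices) makes this at most $\delta/2$. Combining with Step 1, $M\le X\le 8LV\frac{(\log E)^{D-1}}{(D-1)!}+2\log(2/\delta)$ outside an event of probability at most $\delta$.

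The main care point is Step 2: we need a uniform bound on the Gamma tail that does not depend on the individual $k_i$. This is handled by the monotonic replacement of $\log(1/\tau_i)$ by $\log E$ together with the assumption $E\ge e^{2D}$, which makes the partial exponential series dominated by its top term.
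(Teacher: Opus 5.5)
Your proposal is correct and follows essentially the same route as the paper. Both arguments split the minimal points at the box-volume threshold $2L/k_i$ and kill the large-volume ones with a union bound (the paper phrases this as Markov's inequality on their count $B$). Both then bound the number of small-volume points using the Gamma-tail formula, with the top term dominating because $\log E\ge 2D$, and finish with a Chernoff bound, which you derive explicitly from the exponential moment at $t=1$ where the paper only cites a standard bound.
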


\begin{proof}
    Given $y\in[0,1]^D$, set $\rho(y)=y_1\cdot \cdots \cdot y_D$. We shall estimate $M_i$ by partitioning the set of minimal points in $S_i$ into two classes: the set of those minimal points $Y\in S_i$ that have $\rho(Y)\leq 2L/k_i$, of cardinality $A_i$, and the set of all other minimal points in $S_i$, which have $\rho(Y)> 2L/k_i$, of cardinality $B_i$.

    Note that the latter set is likely to be small: given $y\in [0,1]^D$, conditionally on $Y_{i,j}=y$, the probability that $Y_{i,j}$ is minimal in $S_i$ is $(1-\rho(y))^{k_i-1}$. This implies that $\mathbb{E}[B_i]\leq k_i(1-2L/k_i)^{k_i-1}\leq k_ie^{-(k_i-1)2L/k_i}\leq k_ie^{-L}$ (where we have used that $k_i>2L$). Thus, letting $B=\sum_{i=1}^VB_i$, we have $\mathbb{E}[B]\leq 2Ee^{-L}$, which by Markov's inequality implies that $\mathbb{P}(B\geq 1)\leq 2Ee^{-L}=\frac\delta 2$.

    The aim is now to estimate each $A_i$ by just using the fact that, even disregarding minimality, $\rho$ is unlikely to be too small. Indeed, consider a single random variable $Y_{i,j}$, which is uniform in $[0,1]^D$. The probability that $\rho(Y_{i,j})\leq a$ can be computed exactly: it is
    $$\mathbb{P}(\log(\rho(Y_{i,j}))\leq \log(a))=\Pr\left(-\sum_{j=1}^D \log U_j\geq -\log(a)\right),$$
    where the $U_j$'s are independent uniform random variables in $[0,1]$, and therefore the variables $-\log U_j$ are independent exponential random variables of parameter 1. The law of the sum is the Gamma distribution $\Gamma(D,1)$, which has density
    $$f(s)=\frac{s^{D-1}e^{-s}}{(D-1)!}.$$
    It follows that
    $$\Pr(\rho(Y_{i,j})\leq a)=\int_{-\log a}^{\infty}\frac{s^{D-1}e^{-s}}{(D-1)!}\mathrm{d}s=a\sum_{h=0}^{D-1}\frac{\log(1/a)^h}{h!}$$
    where the last well-known identity is obtained by iteratively integrating by parts.
    
    Now, letting $A_i'$ be $|\{j\in [k_i]\mid \rho(Y_{i,j})\leq 2L/k_i\}|$ and setting $A=\sum_{i=1}^VA_i'$, we have 
    $$\mu:=\mathbb{E}[A]=\sum_{i=1}^V \mathbb{E}[A_i']\leq \sum_{i=1}^V k_i\frac{2L}{k_i}\sum_{h=0}^{D-1} \frac{\log(k_i/2L)^h}{h!}\leq 2VL\sum_{h=0}^{D-1} \frac{(\log E)^h}{h!}.$$

In order to simplify the expression above, we may assume as we did in the statement that $\log E\geq 2D> 2D-2$, which would yield
$$\mu\leq 4LV\frac{(\log E)^{D-1}}{(D-1)!}.$$

    Finally, since $A_i'$ is a binomial random variable, we can apply a standard Chernoff bound to the sum of independent Bernoulli random variables $A$ in terms of its expected value $\mu$: 
    we have that $\Pr(A>2\mu+2\log(2/\delta))\leq e^{-\log(2/\delta)}=\delta/2$. Since $M\leq A+B$, this yields 
    $$\Pr(M>2\mu+2\log(2/\delta))\leq \Pr(B>0)+\Pr(B=0, A>2\mu+2\log(2/\delta))\leq\frac\delta2+\frac\delta2=\delta.$$
    By replacing $\mu$ with its upper bound $4LV\frac{(\log E)^{D-1}}{(D-1)!}$, one obtains the expression in the statement.
    
\end{proof}

\begin{cor}\label{cor:minimal-labels-in-GBS-graph}
    Fix positive integer parameters $V, E, D, N$, and $\delta\in(0,1)$, satisfying $E\geq e^{2D}$. Set $L=\log(4E/\delta)$.

    Consider a random GBS graph with parameters $V, E, D, N$ conditioned on each vertex having valence more than $2L$ and on no two labels having the same exponent for any prime $p\in \mathcal{P}$. Let $M$ be the number of labels among the $2E$ sampled ones that are minimal for the divisibility relation among labels attached to the same endpoint vertex. As in \Cref{prop:minimal-continuous-estimate}, we have
    $$\Pr\left(M\leq 8LV\frac{(\log E)^{D-1}}{(D-1)!}+2\log(2/\delta)\right)\geq 1-\delta.$$
\end{cor}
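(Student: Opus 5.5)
We reduce the statement to \Cref{prop:minimal-continuous-estimate} by a coupling between the discrete exponent vectors and i.i.d.\ uniform points in $[0,1]^D$, conditioning on the graph first.

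\emph{Conditioning on the graph.} We first condition on the underlying random graph $\Gamma$. Since the labels are sampled independently of $\Gamma$, this is harmless. Fixing $\Gamma$ fixes the valences $k_1,\dots,k_V$, with $\sum_i k_i=2E$. Loops count twice, so each label is attached to exactly one endpoint vertex. By hypothesis $\min_i k_i>2L$. It therefore suffices to prove the bound conditionally on every such $\Gamma$, and then average over $\Gamma$.

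\emph{Coupling.} Let $(Y_{i,j})$ be i.i.d.\ uniform points in $[0,1]^D$, indexed by the label slots at vertex $i$. Define discrete exponents by $\alpha_{i,j}(r)=\lfloor N\,Y_{i,j}(r)\rfloor\in\{0,\dots,N-1\}$. These are i.i.d.\ uniform, so together with independent signs they have exactly the law of the random labels.

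The map $t\mapsto\lfloor Nt\rfloor$ is monotone. Consequently, whenever two discrete coordinates differ, the continuous coordinates are ordered the same way. Now work on the event $\mathcal{D}$ that, for each prime, all $2E$ exponents are distinct; this is the event we condition on. On $\mathcal{D}$, the coordinatewise order of the discrete vectors coincides with that of the continuous ones. Moreover, divisibility between distinct labels is exactly the strict relation $\prec$ on exponent vectors, since equality never occurs on $\mathcal{D}$.

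Hence on $\mathcal{D}$ a label is minimal among those at its vertex if and only if the corresponding $Y_{i,j}$ is minimal in $S_i$. The discrete count $M$ therefore equals the continuous count $M^{\mathrm{cont}}$ of \Cref{prop:minimal-continuous-estimate}.

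\emph{The conditioning step.} This is the main obstacle. Conditioning on $\mathcal{D}$ could in principle distort the law of the continuous points. I would avoid the issue by using symmetry. Conditional on the set of values taken in each coordinate, the event $\mathcal{D}$ means that each coordinate induces a uniformly random total order on the $2E$ points, independently across coordinates. On the continuous side, almost surely all coordinates are distinct, and they too induce independent uniformly random orders, one per coordinate.

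Minimality within each $S_i$ depends only on these $D$ relative orders. So the law of $M$ conditioned on $\mathcal{D}$ (and on $\Gamma$) equals the unconditioned law of $M^{\mathrm{cont}}$.

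\emph{Conclusion.} Applying \Cref{prop:minimal-continuous-estimate} with the fixed $k_i$, which is valid since $k_i>2L$ and $E\ge e^{2D}$, bounds the probability of the bad event by $\delta$. Averaging over $\Gamma$ gives the claimed bound $\ge 1-\delta$.

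The coupling via $\lfloor NY\rfloor$ is not even strictly needed: the rank-symmetry argument alone suffices. I would present the rank argument as the core step.
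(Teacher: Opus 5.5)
Your proposal is correct and follows the paper's argument. You condition on the graph to fix the valences, observe that under the no-ties conditioning each coordinate induces an independent uniform permutation of the labels (exactly as i.i.d.\ uniform points in $[0,1]^D$ do), note that minimality depends only on these $D$ orders, and then apply \Cref{prop:minimal-continuous-estimate}. The $\lfloor NY\rfloor$ coupling is an optional extra; the rank-symmetry step you single out as the core is precisely the paper's proof.
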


\begin{proof}
Conditionally on the random graph $\Gamma$, assuming the valence of vertex $i$ is $k_i$, consider the random labels $(\ell_{i,j})_{i\in [V],j\in [k_i]}$. For each of these, one can consider the corresponding vector $X_{i,j}\in [N]^D$ defined as described at the beginning of this section, whose entries are the $D$ exponents augmented by one. If one ignores the conditioning, the vectors $(X_{i,j})_{i\in [V],j\in [k_i]}$ are independent and uniformly distributed in $[N]^D$. Under the conditioning that there are no pairs $(i,j)$ and $(i',j')$ and no $d\in[D]$ such that $X_{i,j}(d)=X_{i',j'}(d)$, for each $d\in [D]$ the ordering of the values $X_{i,j}(d)$ induces a uniform random permutation $\sigma_d$ of the set $\{(i,j)\mid i\in [V],j\in [k_i]\}$. The $D$ permutations are all independent and fully determine the original divisibility relation $\prec$. If one were to instead sample independent random variables $Y_{i,j}$, each uniformly distributed in $[0,1]^D$, those would also induce uniform permutations, independent for each $d\in [D]$. It follows that the random variable $M$ from \Cref{prop:minimal-continuous-estimate} has the same distribution as the number of minimal labels under this conditioning.
\end{proof}

\begin{remark}
    Note that one could compute the actual expected value of the total number of minimal labels. Assuming valences $k_1,\ldots,k_V$ and under the conditioning that no two sampled exponents are the same, the probability that a given label attached to vertex $i$ is minimal is
    $$\int_{[0,1]^D}(1-\rho(y_1,\ldots,y_D))^{k_i-1}\mathrm{d}y_1\cdot\cdots\cdot\mathrm{d}y_D=\sum_{h=0}^{k_i-1}(-1)^h{k_i-1 \choose h}\frac{1}{(h+1)^D}.$$
    If $D$ is fixed and each of the $k_i$'s is large, the asymptotics for this probability are of order $\frac{(\log k_i)^{D-1}}{k_i(D-1)!}$; by concavity of the logarithm, this would yield an upper bound for the expected total number of minimal points of order (up to a multiplicative constant) $V\frac{\log(2E/V)^{D-1}}{(D-1)!}$.

    Obtaining an actual tight second moment bound from here does not particularly further our aims within this paper, so we chose to give a fairly elementary argument for the estimate of \Cref{prop:minimal-continuous-estimate}. Note that, though the order of our estimate is indeed worse, it only differs by logarithmic factors in $E$ with respect to a sharp one.
\end{remark}

Finally, in order to estimate the probability that there is an edge with two minimal labels attached to its endpoints, we use the following elementary lemma:

\begin{lem}\label{lem:no-pair-of-minimals}
    Given a list of $2E$ objects, of which $M\leq E$ are \emph{marked} and $2E-M$ \emph{unmarked}, we pair them up at random. The probability that no marked object is paired with another marked object is at least $1-\frac{M^2}{2E}$.
\end{lem}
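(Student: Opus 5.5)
The plan is a union bound over pairs of marked objects. "Pair them up at random" means a uniformly random perfect matching of the $2E$ objects. Fix two distinct marked objects $a,b$. By symmetry, the partner of $a$ is uniformly distributed among the other $2E-1$ objects, so
\[
\Pr(a \text{ is paired with } b)=\frac{1}{2E-1}.
\]

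There are $\binom{M}{2}$ unordered pairs of marked objects. The event that some marked object is paired with another marked object is the union, over these pairs $\{a,b\}$, of the events that $a$ is matched to $b$. The union bound therefore gives
\[
\Pr(\text{some marked--marked pair})\le \binom{M}{2}\frac{1}{2E-1}=\frac{M(M-1)}{2(2E-1)}.
\]

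It remains to check that $\frac{M(M-1)}{2(2E-1)}\le\frac{M^2}{2E}$. This is equivalent to $E(M-1)\le M(2E-1)$, which holds trivially since $M(2E-1)-E(M-1)=EM-M+E\ge 0$. In fact the bound is off by roughly a factor of $2$, so there is plenty of room, and the hypothesis $M\le E$ is not even needed for this argument.

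There is no real obstacle. The only point worth stating explicitly is the uniformity of a partner under a random matching, which follows from the symmetry of the uniform matching under permutations of the objects fixing $a$. If "random pairing" instead refers to the pairing induced by the random graph model (consecutive slots $(2i-1,2i)$, with marking determined independently of the slot positions), the same symmetry applies. In that setting the labels and endpoints are i.i.d., so the marked set is exchangeable with respect to slot positions, and the computation is unchanged.
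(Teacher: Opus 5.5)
Your proof is correct and is essentially the paper's argument: the paper takes the same partner probability $\frac{1}{2E-1}$ and the same count $\binom{M}{2}$, computes the expected number of marked--marked pairs by linearity, and applies Markov's inequality, which is just your union bound phrased differently. Your check that $\frac{M(M-1)}{2(2E-1)}\le\frac{M^2}{2E}$ is right, and so is your remark that the hypothesis $M\le E$ is never used.
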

\begin{proof}Fix a pair of objects $\{x,y\}$; the probability that $x$ and $y$ are matched in the random pairing is $\frac{1}{2E-1}$ (as $x$ is matched with a uniform object other than itself). Consider now the number of pairs consisting of two marked objects: by linearity of expectation, a uniform pairing will have on average $\frac{1}{2E-1}{M \choose 2}$. By Markov's inequality, the probability that there is at least one such pair is thus at most $\frac{1}{2E-1}{M \choose 2}\leq \frac{M^2}{2E}$. 
\end{proof}

\subsection{Random GBS graphs with many edges}

Throughout this section, we consider a random GBS with $V$ vertices, $E$ edges, $D$ primes and exponents up to $N$. By combining the results of the previous section, we estimate the probability that a random GBS is flexible (see \Cref{def:flexible-GBS}).

\begin{thm}\label{thm:random-GBS}
    Let $0<\epsilon<1/50$. Consider a random GBS graph with parameters $V, E, D, N$. 
    Suppose that $E\ge V^{2+\epsilon}$,  $E\ge e^{2D/\epsilon^2}$ and $N\ge E^4$. Then the GBS graph is connected and flexible with probability at least $1-\frac{1}{E^{\epsilon/8}}$.
\end{thm}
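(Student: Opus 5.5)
The plan is a union bound over the ways the random GBS graph can fail to be connected or flexible, with each piece bounded by $\frac14 E^{-\epsilon/8}$ or less. I would fix $\delta=E^{-1}$ (or a similar small negative power of $E$) and consider the following bad events:
\begin{enumerate}
    \item the graph is disconnected, bounded by \Cref{prop:probability-connected};
    \item some vertex has valence below $\frac{2E}{6V}$, bounded by \Cref{lem:estimate-valences};
    \item some label equals $\pm1$, bounded by \Cref{lem:itm-fully-red} by $4E/N^D\le 4E^{-3}$;
    \item two exponents coincide for some prime, bounded by \Cref{lem:itm-full-supp-gaps} by $4E^2D/N\le 4D/E^2$, and since $D\le \epsilon^2\log E/2$ this is tiny;
    \item the number $M$ of minimal labels is large, controlled by \Cref{cor:minimal-labels-in-GBS-graph};
    \item some edge has both ends minimal, controlled by \Cref{lem:no-pair-of-minimals}.
\end{enumerate}
For the first two items, $E\ge V^{2+\epsilon}$ gives $E/V\ge E^{(1+\epsilon)/(2+\epsilon)}\ge E^{1/2}$, which is far larger than $V(\log V+\log(2/\delta))$ once $E$ is large. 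Note that $E\ge e^{2D/\epsilon^2}\ge e^{5000}$, so $E$ is always huge and all "large enough" conditions hold automatically. These items therefore contribute $O(\delta)$.

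On the complement of the valence and distinct-exponent events, the minimum valence is at least $\frac{E}{3V}\ge \frac13E^{1/2}>2L$, where $L=\log(4E/\delta)=O(\log E)$. Also $E\ge e^{2D}$, so \Cref{cor:minimal-labels-in-GBS-graph} applies. The conditioning is harmless: the valence event depends only on the graph, and the exponent event depends only on the labels, which are independent of the graph. Hence with probability at least $1-\delta$,
\[
M\le 8LV\frac{(\log E)^{D-1}}{(D-1)!}+2\log(2/\delta).
\]

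The main obstacle is showing that this bound is at most about $E^{1/2-\epsilon/8}$, so that $M^2/(2E)\le E^{-\epsilon/4}$. First, $V\le E^{1/(2+\epsilon)}\le E^{1/2-\epsilon/5}$ for small $\epsilon$. Next I bound $(\log E)^{D-1}/(D-1)!\le (e\log E/(D-1))^{D-1}$. Writing $x=(D-1)/\log E\le \epsilon^2/2$, this equals $\exp\bigl(\log E\cdot x\log(e/x)\bigr)$. The function $x\log(e/x)$ is increasing for $x<1$, so it is at most $\frac{\epsilon^2}{2}\log(2e/\epsilon^2)$. Since $\epsilon<1/50$, this is well below $\epsilon/20$, because $\epsilon\log(2e/\epsilon^2)$ is small in that range. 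This is exactly where the hypotheses $E\ge e^{2D/\epsilon^2}$ and $\epsilon<1/50$ enter. Combining with $L=O(\log E)$ gives $M\le C(\log E)\,E^{1/2-\epsilon/5+\epsilon/20}\le E^{1/2-\epsilon/8}$, using again that $E$ is huge. In particular $M\le E$.

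For the last step I condition on the graph's vertex degrees and on the labels. The minimality of each label depends only on which vertex it sits at, i.e.\ on the multiset of labels at each vertex. The pairing of the $2E$ edge-ends into topological edges is then uniform given the endpoint assignment, since the $v_i$ are i.i.d. More precisely, first sample the $2E$ ends with their vertices and labels, then pair them uniformly at random. \Cref{lem:no-pair-of-minimals} then gives failure probability at most $M^2/(2E)\le \frac12E^{-\epsilon/4}$. Summing all the pieces gives a total failure probability of $O(E^{-\epsilon/4}+E^{-1}\log E)\le E^{-\epsilon/8}$. On the good event, \Cref{lem:fully-reduced} and the earlier lemmas yield \Cref{itm:fully-red,itm:no-min-min,itm:full-supp-gaps}, so the graph is connected and flexible.
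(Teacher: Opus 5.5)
Your decomposition is exactly the paper's: a union bound over disconnection, low valence, a label equal to $\pm1$, exponent ties, too many minimal labels (via \Cref{cor:minimal-labels-in-GBS-graph} with $\delta=1/E$), and an edge with both ends minimal (via \Cref{lem:no-pair-of-minimals}, after conditioning so that the pairing of edge-ends is uniform). Those probabilistic steps are fine.

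The gap is in the one genuinely quantitative step, the bound on $M$. This is where $\epsilon<1/50$ and $E\ge e^{2D/\epsilon^2}$ are actually used.

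\begin{itemize}
\item Your claim $\frac{\epsilon^2}{2}\log(2e/\epsilon^2)\le\frac{\epsilon}{20}$ is false. At $\epsilon=1/50$ one has $\epsilon\log(2e/\epsilon^2)=\frac{1}{50}\log(5000e)\approx 0.19>\frac{1}{10}$, so the quantity is about $0.095\,\epsilon$. The inequality fails for all $\epsilon$ above roughly $1/110$.
\item With the correct value, for $\epsilon$ near $1/50$ your estimates only give $V\frac{(\log E)^{D-1}}{(D-1)!}\le E^{1/2-\epsilon/5+0.095\epsilon}$. This is larger than $E^{1/2-\epsilon/8}$, so your target $M\le E^{1/2-\epsilon/8}$ is not reached.
\item Separately, absorbing the factor $C\log E$ ``because $E$ is huge'' is not automatic. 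The hypotheses only give $\log E\ge 2D/\epsilon^2$. For $D=1$, $\log E\approx 2/\epsilon^2$ and $\epsilon$ near $1/50$, the slack $\frac{\epsilon}{40}\log E$ in your chain is about $2.5$, whereas $\log(16\log(2E))\approx 11.3$.
\end{itemize}

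At the edge of the parameter range the exponent slack is of order $1/\epsilon$, while $\log\log E$ is of order $\log(1/\epsilon)$. So the constants must be tracked explicitly.

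To repair this, aim lower and estimate more sharply, as the paper does. It suffices to show $M\le E^{1/2-\epsilon/16}$. Then \Cref{lem:no-pair-of-minimals} gives failure probability at most $\frac12E^{-\epsilon/8}$, and all other bad events together cost $O(1/E)$.

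\begin{itemize}
\item Keep the exact bound $V\le E^{1/(2+\epsilon)}=E^{1/2-\epsilon/(2(2+\epsilon))}$, which is a gap of about $\epsilon/4.04$, not $\epsilon/5$.
\item Absorb the additive term using $2\log(2E)\le E^{1/(2+\epsilon)}$.
\item Prove $8L\frac{(\log E)^{D-1}}{(D-1)!}+1\le E^{\epsilon/6}$ with $L=2\log(2E)$. For $D=1$ this follows directly from $\log E\ge 2/\epsilon^2$. For $D\ge2$ it follows from your Stirling estimate, using the correct bound $\frac{\epsilon^2}{2}\log(2e/\epsilon^2)<\frac{\epsilon}{10}$ together with $\log E\ge 4/\epsilon^2$.
\end{itemize}

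Since $\frac{1}{2+\epsilon}+\frac{\epsilon}{6}\le\frac12-\frac{\epsilon}{16}$, this yields $M\le E^{1/2-\epsilon/16}$ and closes the argument.
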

\begin{proof}

Let $\Gamma$ be a random GBS graph with $V$ vertices and $E$ edges, and let $(\ell_e)_{e\in [2E]}$ be the random labels attached to the oriented edges, which we label as $1,\ldots,2E$.

We have already seen that a number of events jointly occur with high probability: the random graph $\Gamma$ is connected, the labels satisfy both \Cref{itm:fully-red} of \Cref{def:flexible-GBS} and \Cref{itm:full-supp-gaps} of \Cref{def:flexible-GBS}.

Indeed, by \Cref{prop:probability-connected} used with $\delta=\frac{1}{E}$, the graph is connected with probability at least $1-\frac{1}{E}$: we have $E\geq V^{2+\epsilon}$ and $E\geq e^{2D/\epsilon^2}\geq e^{2\cdot 50^2}$, so $\frac{E}{2}\geq V\log V$ and $\frac{E}{2}\geq V\log (2E)$. 

    By \Cref{lem:itm-fully-red}, the GBS graph satisfies \Cref{itm:fully-red} of \Cref{def:flexible-GBS} with probability at least $1-\frac{4E}{N^D}\ge 1-\frac{1}{E}$.

    By \Cref{lem:itm-full-supp-gaps}, the sampled labels have $2E$ pairwise distinct exponents for each of the $D$ primes (and thus the random GBS graph satisfies \Cref{itm:full-supp-gaps} of \Cref{def:flexible-GBS}) with probability at least $1-\frac{4E^2D}{N}\ge 1-\frac{1}{E}$.

    We now focus on \Cref{itm:no-min-min} of \Cref{def:flexible-GBS}. In \Cref{cor:minimal-labels-in-GBS-graph}, take again $\delta=1/E$ so that $L=2\log(2E)$. By \Cref{lem:estimate-valences}, every vertex of a random graph $\Gamma$ with $V$ vertices and $E$ edges has valence at least $\frac{1}{6}\frac{2E}{V}>4\log(2E)=2L$ with probability at least $1-\frac{1}{E}$.

    With probability at least $1-\frac{1}{E}$, under the valence and no-ties-between-exponents conditionings, the total number of minimal labels is at most
    $$8LV\frac{(\log E)^{D-1}}{(D-1)!}+2\log(2E).$$

    We now use the fact that $V\leq E^{\frac{1}{2+\epsilon}}$ and $D\leq \frac{\epsilon^2}{2}\log E$, as well as $2\log(2E)\leq E^{\frac{1}{2+\epsilon}}$: we have
    
    $$8L\frac{(\log E)^{D-1}}{(D-1)!}+1\leq 16\log(2E)\frac{(\log E)^{D-1}}{(D-1)!}.$$

    For $D=1$, the above is at most $16\log(2E)\leq E^{\frac{\epsilon}{6}}$. For $D\geq 2$, by a Stirling estimate, we have $(D-1)!\geq (\frac{D-1}{e})^{D-1}$, and since $(ex/k)^k$ is increasing in $k$ for $k<x$, we can upper bound the expression above by
    $$16\log(2E)\left(\frac{e\log E}{D}\right)^D\leq16\log(2E)\left(\frac{2e}{\epsilon^2}\right)^{\frac{\epsilon^2}{2} \log E}\leq 16\log(2E)E^{\frac{\epsilon^2}{2}(\log(2/\epsilon^2)+1)}\leq E^{\frac{\epsilon}{6}}.$$
    
    Therefore, the bound for the number of minimal labels is at most $E^{\frac{1}{2+\epsilon}+\frac{\epsilon}{6}}\leq E^{\frac12-\frac \epsilon {16}}$. 
    
    It is now useful to remark that sampling a random graph with $V$ vertices and $E$ edges, conditioned on having valences $k_1,\ldots,k_V$, can also be done by making a list of vertices $(v_i)_{i\in [2E]}$ with $k_i$ copies of vertex $i$ for each $i\in [V]$ and sampling (topological) edges by picking a uniform random pairing of the elements of $[2E]$, adding the edge $\{v_i,v_j\}$ (or a loop around $v_i$ if $i=j$) if $i$ and $j$ are paired.
    
    This way, we obtain a lower bound of $1-\frac{E^{1-\frac\epsilon {8}}}{2E}=1-\frac{1}{2E^{\epsilon/8}}$ for the probability that the GBS graph is flexible, by \Cref{lem:no-pair-of-minimals}.

    Putting everything together, the probability that the GBS graph is not flexible or not connected is at most, by a straightforward union bound, the sum of the probabilities of several bad events, plus the conditional probability that two minimal labels are matched. We obtain an upper bound of $\frac{5}{E}+\frac{1}{2E^{\epsilon/8}}\leq\frac{1}{E^{\epsilon/8}}$, hence the estimate in the theorem statement.
\end{proof}

\begin{thm}\label{thm:random-GBS-one-vertex}
    Consider a random GBS graph with $V=1$ vertex, $E$ edges, $D$ primes and exponents up to $N$. Suppose that $E$ is large enough and $E\ge e^{9D}$ and $N\ge E^4$. Then the GBS is flexible with probability at least $1-\frac{1}{\sqrt[4]{E}}$.
\end{thm}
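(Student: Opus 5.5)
We follow the proof of \Cref{thm:random-GBS}, specialized to $V=1$. Connectivity is automatic, since the graph has a single vertex. Every one of the $2E$ oriented edges ends at that vertex, so the valence is $2E$ and no valence estimate is needed. We must bound the probabilities that \Cref{itm:fully-red}, \Cref{itm:full-supp-gaps} and \Cref{itm:no-min-min} of \Cref{def:flexible-GBS} fail.

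For \Cref{itm:fully-red}, \Cref{lem:itm-fully-red} gives failure probability at most $4E/N^D\le 1/E$, using $N\ge E^4$. For \Cref{itm:full-supp-gaps}, \Cref{lem:itm-full-supp-gaps} gives failure probability at most $4E^2D/N\le 1/E$; here we use $D\le \log E/9\le E$ and $N\ge E^4$, and "$E$ large enough" absorbs the constants. On this event all exponents are distinct, so \Cref{cor:minimal-labels-in-GBS-graph} applies with $V=1$ and $\delta=1/E$. Then $L=\log(4E^2)\le 3\log E$, and the condition $k_1=2E>2L$ holds for large $E$. The hypothesis $E\ge e^{2D}$ follows from $E\ge e^{9D}$. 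We conclude that, with probability at least $1-1/E$,
\[
M\le 8L\frac{(\log E)^{D-1}}{(D-1)!}+2\log(2E).
\]

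The main step is to show that this bound is at most $E^{3/8}$ for large $E$, so that \Cref{lem:no-pair-of-minimals} yields failure probability at most $M^2/(2E)\le \frac12 E^{-1/4}$. Set $x=\log E$, so $D\le x/9$. For $D\ge2$, use $(D-1)!\ge((D-1)/e)^{D-1}$ and the monotonicity of $k\mapsto (ex/k)^k$ for $k<x$. This gives
\[
\frac{x^{D-1}}{(D-1)!}\le \left(\frac{ex}{D-1}\right)^{D-1}\le (9e)^{x/9}=E^{\log(9e)/9}.
\]
Since $\log(9e)/9=(1+\log 9)/9\approx 0.355<3/8$, the polylogarithmic prefactor $24\log E+2\log(2E)$ is absorbed for $E$ large, giving $M\le E^{3/8}$. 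The case $D=1$ is trivial, since then $M=O(\log E)$. The whole argument hinges on this check that $e^{9D}$ makes the exponent strictly below $3/8$, and that is exactly where the constant $9$ is used.

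For the pairing step, the uniform random pairing of the $2E$ half-edges is available because, conditioned on the valence, the multigraph is obtained by matching the $2E$ endpoints uniformly at random. This pairing is independent of the labels, which are attached to the endpoints. As in \Cref{thm:random-GBS}, we first reveal which labels are minimal; these are the marked objects, and $M\le E$ holds. \Cref{lem:no-pair-of-minimals} then shows that no edge has both ends minimal, except with probability at most $\frac{1}{2}E^{-1/4}$.

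A union bound gives total failure probability at most $3/E+\frac12E^{-1/4}\le E^{-1/4}$ for $E$ large.
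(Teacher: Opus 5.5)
Your proposal is correct and follows the paper's proof essentially step by step. It uses the same bounds from \Cref{lem:itm-fully-red} and \Cref{lem:itm-full-supp-gaps}, the same application of \Cref{cor:minimal-labels-in-GBS-graph} with $\delta=1/E$ (so $L=2\log(2E)$), the same Stirling-plus-monotonicity estimate giving the exponent $(1+\log 9)/9<3/8$, and the same final use of \Cref{lem:no-pair-of-minimals} with a union bound. You also make explicit a few hypotheses the paper leaves implicit: that $2E>2L$, that $E\ge e^{2D}$, and why the pairing of labels is uniform.
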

\begin{proof}
Note that a random graph $\Gamma$ with one vertex is automatically connected.

By \Cref{lem:itm-fully-red}, the random GBS graph satisfies \Cref{itm:fully-red} of \Cref{def:flexible-GBS} with probability $\ge 1-\frac{4E}{N^D}\ge 1-\frac{1}{E}$.

    By \Cref{lem:itm-full-supp-gaps}, the random GBS graph satisfies \Cref{itm:full-supp-gaps} of \Cref{def:flexible-GBS} with probability $\ge 1-\frac{4E^2D}{N}\ge 1-\frac{1}{E}$.

    We can now apply \Cref{cor:minimal-labels-in-GBS-graph} with $\delta=1/E$ and thus $L=2\log(2E)$: the number of minimal labels is at most
    $$8L\frac{\left(\log E\right)^{D-1}}{(D-1)!}+2\log(2E)$$
    with probability at least $1-\frac1 E$. On the other hand, since $D\leq\frac{\log E}{9}$, by a Stirling estimate similar to the one in the previous proof we obtain that the bound above is at most
    $$16\log(2E)(9e)^{\frac19\log E}\leq E^{3/8}$$
    when $E$ is large. Finally, by \Cref{lem:no-pair-of-minimals}, having performed all the previous conditionings, the probability that two minimal labels end up attached to opposite ends of the same edge is at most $\frac{(E^{3/8})^2}{2E}=\frac{1}{2 E^{1/4}}$.

Our random GBS graph will therefore be flexible with probability at least $1-\frac{3}{E}-\frac{1}{2E^{1/4}}\geq 1-\frac{1}{E^{1/4}}$ (for $E$ large enough).
\end{proof}

\section{An alternative random model}

In this section we consider an alternative definition of a random GBS with parameters $V,E,L$: we first sample a random graph with $V$ vertices and $E$ edges, then choose the $2E$ labels independently and uniformly at random in the set $\{\pm1,\pm2,\dots,\pm L\}$.

\begin{lem}\label{lem:all-minimal}
    Suppose that $k\le\sqrt[4]{L}$ and $L\ge 2^{16}$. Choose $k$ integers independently and uniformly at random in $[1,L]$. The probability that one of them divides another is at most $\frac{1}{\sqrt[4]{L}}$.
\end{lem}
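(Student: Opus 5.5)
The plan is a union bound over ordered pairs of indices, followed by a direct count of the divisibility probability for two independent uniform integers in $[1,L]$. Let $x_1,\dots,x_k$ be the sampled integers. The event that one of them divides another is contained in the union, over ordered pairs $(a,b)$ with $a\neq b$, of the events $\{x_a\divides x_b\}$. Ties also count, since $x\divides x$. There are $k(k-1)<k^2\le\sqrt{L}$ ordered pairs, so it suffices to show that for two independent uniform $x,y\in[1,L]$,
\[
\Pr(x\divides y)\le \frac{1}{L^{3/4}}.
\]
Multiplying by $\sqrt{L}$ then gives the claimed bound $\frac{1}{\sqrt[4]{L}}$.

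For the single-pair estimate I count directly. For fixed $x$, the number of $y\in[1,L]$ with $x\divides y$ is $\lfloor L/x\rfloor\le L/x$. Hence
\[
\Pr(x\divides y)=\frac{1}{L^2}\sum_{x=1}^{L}\left\lfloor \frac{L}{x}\right\rfloor\le \frac{1}{L}\sum_{x=1}^L\frac1x\le\frac{1+\log L}{L}.
\]
It then remains to check that $1+\log L\le L^{1/4}$ whenever $L\ge 2^{16}$. At $L=2^{16}$ the left side is $1+16\log 2\approx 12.1$ and the right side is $2^4=16$. The function $L^{1/4}-\log L$ is increasing for $L\ge 4^4$, so the inequality persists for all larger $L$.

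Combining the two steps, the probability is at most
\[
k^2\cdot\frac{1+\log L}{L}\le \sqrt{L}\cdot\frac{L^{1/4}}{L}=\frac{1}{\sqrt[4]{L}}.
\]
The only delicate point is the numerical verification that $1+\log L\le L^{1/4}$ from $L=2^{16}$ onward. This is where the hypothesis $L\ge 2^{16}$ is used, and the margin there (roughly $12.1$ against $16$) is comfortable, so I do not expect a genuine obstacle. If signs are later attached to the labels, they are irrelevant to divisibility, which is why the lemma works with $[1,L]$.
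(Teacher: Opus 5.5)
Your proposal is correct and follows the paper's proof essentially verbatim. Both use the exact single-pair count $\frac{1}{L^2}\sum_{m=1}^{L}\lfloor L/m\rfloor\le\frac{1+\log L}{L}$, a union bound over the $k(k-1)\le\sqrt{L}$ ordered pairs, and the numerical fact $1+\log L\le L^{1/4}$ for $L\ge 2^{16}$, which you verify more explicitly (value at $2^{16}$ plus monotonicity) than the paper does.
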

\begin{proof}
Let $a_1,\ldots,a_k$ be the chosen integers. For two distinct indices $i\neq j$, we have
$$\Pr(a_i\mid a_j)=\frac{1}{L^2}\sum_{m=1}^L\lfloor \frac{L}{m}\rfloor\leq\frac{1}{L}\sum_{m=1}^L \frac{1}{m}\leq\frac{1+\log L}{L}.$$

By taking a union bound, the probability that some $a_i$ divides some $a_j$ with $j\neq i$ is at most $k(k-1)\frac{1+\log L}{L}\leq \frac{1+\log L}{\sqrt{L}}$, since $k\leq \sqrt[4]{L}$. For $L\geq 2^{16}$, the bound is at most $\frac{1}{\sqrt[4]{L}}$.
\end{proof}

\begin{thm}\label{thm:random-GBS-is-rigid}
    Consider a random GBS with parameters $V,E,L$, conditioned on the graph being connected. Suppose that $L\ge (2E)^4$. Then the GBS is rigid with probability at least $1-\frac{1}{\sqrt[4]{L}}$.
\end{thm}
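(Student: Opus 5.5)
The plan is to reduce rigidity to \Cref{lem:all-minimal}, applied to the whole family of $2E$ labels at once rather than vertex by vertex. Rigidity (\Cref{def:rigid-GBS}) fails only if there are distinct edges $e\neq e'$ with $\tau(e)=\tau(e')$ and $\psi(e)\divides\psi(e')$. In particular, failure implies that among all $2E$ labels, some label divides another in absolute value. So it suffices to bound the probability of the latter event. This event no longer involves the graph at all.

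First I would handle the conditioning. In the alternative model the labels are sampled independently of the random graph. Hence conditioning on the graph being connected does not change the joint law of the $2E$ labels, which remain i.i.d.\ uniform in $\{\pm1,\dots,\pm L\}$; formally, one conditions on each fixed connected graph and averages. Next I would discard the signs: divisibility $\psi(e)\divides\psi(e')$ depends only on $|\psi(e)|$ and $|\psi(e')|$. The absolute values are i.i.d.\ uniform in $[1,L]$, since each $m\in[1,L]$ arises from the two labels $\pm m$.

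Then I would apply \Cref{lem:all-minimal} with $k=2E$. The hypothesis $L\ge(2E)^4$ gives exactly $k\le\sqrt[4]{L}$. This yields that, with probability at least $1-\frac{1}{\sqrt[4]{L}}$, no absolute value among the $2E$ labels divides another, so there are no distinct edges at a common vertex with dividing labels, and the GBS graph is rigid.

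The one point needing care is the side condition $L\ge2^{16}$ in \Cref{lem:all-minimal}. Since $E\ge1$, we have $L\ge 2^4$, but not automatically $2^{16}$. If $E\ge 8$, then $L\ge(16)^4=2^{16}$ and the lemma applies directly. For the few small cases, I would either redo the union bound of \Cref{lem:all-minimal}, which gives probability $\le k(k-1)\frac{1+\log L}{L}\le \frac{1+\log L}{\sqrt L}$, and check directly that this is at most $L^{-1/4}$ for $L\ge (2E)^4$ with small $E$; or note that when $E$ is tiny one can sharpen $k(k-1)\le k^2 \le \sqrt L$ to a better bound. I expect this bookkeeping of small parameters, rather than the main argument, to be the only obstacle.
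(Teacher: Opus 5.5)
Your main line is exactly the paper's proof. The paper also ignores the graph, notes that a failure of rigidity forces some divisibility among the $2E$ labels, and applies \Cref{lem:all-minimal} with $k=2E$, using $L\ge(2E)^4$ to get $k\le\sqrt[4]{L}$. Your remarks on the conditioning (labels are independent of the graph) and on discarding signs only make explicit what the paper leaves implicit.

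You are right that $L\ge(2E)^4$ yields the hypothesis $L\ge 2^{16}$ of \Cref{lem:all-minimal} only for $E\ge 8$; the paper's proof passes over this silently. However, the repairs you suggest would not close the gap.

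\begin{itemize}
\item The inequality $\frac{1+\log L}{\sqrt L}\le L^{-1/4}$, i.e.\ $1+\log L\le L^{1/4}$, is false for all $1<L\lesssim 1.1\cdot 10^4$. So it fails at $L=(2E)^4$ for every $E\le 5$.
\item Keeping $k(k-1)$ instead of $k^2$ still fails for $E=2,3,4$. For example, at $E=2$, $L=256$ you get $12\cdot\frac{1+\log 256}{256}\approx 0.31>\frac14$.
\item Even at unordered-pair level, dropping the floors is too lossy there: $6\cdot\bigl(2\sum_{m\le 256}\frac1m-1\bigr)/256\approx 0.26>\frac14$.
\end{itemize}

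What does work is an exact count over unordered pairs. One has $\Pr(a_i\divides a_j\text{ or }a_j\divides a_i)=\bigl(2\sum_{m=1}^{L}\lfloor L/m\rfloor-L\bigr)/L^2$, counting the diagonal $a_i=a_j$ once, followed by a union bound over the $\binom{k}{2}$ pairs. At $E=2$, $L=256$ this gives $6\cdot\frac{2676}{65536}\approx 0.245<\frac14$. So the statement survives there, but with a thin margin. The finitely many remaining cases $E\le 7$, $(2E)^4\le L<2^{16}$ must therefore be checked with this sharper count, not with the estimate inside \Cref{lem:all-minimal}.
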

\begin{proof}
    We are choosing $2E$ non-zero integers, independently and uniformly at random in $\{\pm 1,\pm 2, \dots, \pm L\}$. By \Cref{lem:all-minimal}, with probability at least $1-\frac{1}{\sqrt[4]{L}}$ there is no divisibility relation among them. In particular, there is no divisibility relation between two labels at the same vertex, and thus the GBS is rigid.
\end{proof}

\bibliographystyle{alpha}
{\footnotesize\bibliography{bibliography.bib}}

\end{document}